\newtheorem {theorem}{Theorem}
\newtheorem {acknowledgement}{Acknowledgement}
\newtheorem {corollary}{Corollary}
\newtheorem {lemma}{Lemma}
\newtheorem {proposition}{Proposition}
\numberwithin {equation}{section}   
\begin{document}
\title[Positive exponential sums and odd polynomials]{Positive exponential
sums and odd polynomials}
\author{M. Nin\v{c}evi\'{c} and S. Slijep\v{c}evi\'{c}}
\address{Department of Mathematics, Bijeni\v{c}ka 30, Zagreb, Croatia}
\email{nincevic@math.hr, slijepce@math.hr}
\urladdr{}
\date{5 November 2012}
\subjclass[2000]{Primary 11P99; Secondary 37A45}
\keywords{Positive exponential sums, van der Corput sets, correlative sets,
recurrence, difference sets, Fej\'{e}r's kernel, positive definiteness.}

\begin{abstract}
Given an odd integer polynomial $f(x)$ of a degree $k\geq 3$, we construct a
non-negative valued, normed trigonometric polynomial with the spectrum in
the set of integer values of $f(x)$ not greater than $n$, and a small free
coefficient $a_{0}=O((\log n)^{-1/k})$. This gives an alternative proof for
the maximal possible cardinality of a set $A$, so that $A-A$ does not
contain an element of $f(x)$. We also discuss other interpretations and an
ergodic characterization of that bound.
\end{abstract}

\maketitle

\section{Introduction}

We consider polynomials $f(x)=\alpha _{k}x^{k}+...+\alpha _{1}x$ with
integer coefficients, satisfying:%
\begin{equation}
\text{For all }j\text{ even, }\alpha _{j}=0\text{;\ and the leading
coefficient is }\alpha _{k}>0\text{.}  \label{a:polynomial}
\end{equation}%
The main result of the paper is the following:

\begin{theorem}
\label{t:main1}Given an integer polynomial $f$ of a degree $k\geq 3$
satisfying (\ref{a:polynomial}), there exist cosine polynomials 
\begin{equation}
T(x)=b_{0}+\sum_{0<f(j)\leq N}b_{f(j)}\cos (2\pi f(j)x),  \label{r:type}
\end{equation}%
such that for all $x$, $T(x)\geq 0$, and such that all coefficients $b_{j}$
are non-negative, normed (i.e. $\sum b_{j}=1$), and that $b_{0}=O((\log
N)^{-1/k})$.
\end{theorem}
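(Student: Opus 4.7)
The plan is to seek $T$ in a simple family and reduce the problem to a pointwise lower bound on a weighted cosine sum. Specifically, take
\[
T(x) \;=\; c_0 + c_1\sum_{j=1}^{M} w_j \cos(2\pi f(j)x),
\]
where $M$ is the largest integer with $f(M)\le N$ (so $M\asymp N^{1/k}$), and $w_j\ge 0$ are non-negative weights to be chosen. Matching to the form in \eqref{r:type} gives $b_0 = c_0$ and $b_{f(j)} = c_1 w_j$; writing $W = \sum_j w_j$, the normalisation $\sum b_j = 1$ becomes $c_0 + c_1 W = 1$, and the pointwise non-negativity $T\ge 0$ is equivalent to
\[
S_w(x) := \sum_{j=1}^M w_j\cos(2\pi f(j)x) \;\ge\; -c_0/c_1 \quad\text{for all } x\in\mathbb{T}.
\]
Optimising $c_0$ against these constraints yields $b_0 \;\asymp\; -\min_x S_w(x)/W$, so the entire theorem reduces to producing weights $w_j$ for which
\[
-\min_{x\in\mathbb{T}} S_w(x) \;\le\; C\cdot W\cdot(\log N)^{-1/k}.
\]

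The naive choice $w_j\equiv 1$ is inadequate: for example, for $f(x)=x^3$ a direct computation at $x=4/9$ shows $S(4/9)/M$ converges to $\approx -0.29$ as $M\to\infty$, independently of $N$, forcing $b_0$ to be bounded below by a positive constant. So the weights must actively suppress contributions near major-arc rationals of small denominator. A natural recipe is to combine Fej\'er-type smoothing weights (e.g.\ $1-j/M$) with a sieve factor restricting $j$ to residue classes modulo $Q=\prod_{p\le P}p$ on which the values $f(j)\bmod q$ (for $q\le Q$) are favourably distributed. Crucially, the oddness of $f$ forces $f(\mathbb{Z}/q\mathbb{Z})$ to be symmetric about $0$, which makes the relevant complete exponential sums real-valued and bounded by $O(\sqrt q)$ via a Weil-type estimate, rather than picking up a systematic negative drift.

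The main obstacle is proving the uniform bound $-\min_x S_w(x)\le CW(\log N)^{-1/k}$. I would attack it by a Hardy--Littlewood major/minor arc decomposition: on minor arcs (Diophantine $x$), Weyl's inequality for polynomial exponential sums of degree $k$ gives the much stronger bound $|S_w(x)|\ll M^{1-2^{1-k}}$; on major arcs near $p/q$ with $q\le Q$, the sieve built into $w_j$ together with the oddness of $f$ caps the contribution at $O(W/\sqrt q)$. Optimising the threshold $P$ as a small power of $\log N$ and balancing the two regimes produces the exponent $1/k$. The power $1/k$ (as opposed to the crude Weyl exponent $2^{1-k}$) comes from the major-arc side, where the gain is driven by the sparsity of $f(\mathbb{Z})$ inside $[1,N]$ combined with the odd-symmetry cancellation in the complete sums.
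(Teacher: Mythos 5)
Your general framework is the right one and matches the paper's: seek $T$ as (a shift of) a non-negatively weighted cosine sum over values of $f$, use Fej\'er-type weights, split into major and minor arcs, and use the oddness of $f$ to force the relevant complete sums to be real. But there is a genuine gap at the crucial step, and also a misattribution of where the exponent $1/k$ comes from.

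The gap is in the major-arc treatment. You propose a single sieve, restricting $j$ to one residue class modulo a squarefree $Q=\prod_{p\le P}p$. Suppose you take $j\equiv 0\ (\mathrm{mod}\ Q)$. Then $f(j)\equiv 0\ (\mathrm{mod}\ q)$ for every $q\mid Q^l$, so those major arcs are fine. But for a prime power modulus $q=p^m$ with $p\le P$ and $m>l$ (already $q=2^{l+1}$ is a problem), $q\nmid Q^l$, and the normalized complete sum $S_Q(af,q)/q$ is bounded below only by something like $-c\,p^{-(m-l)/k}$, which is a fixed negative constant independent of $N$. No single residue class modulo a bounded-prime-power $Q$ can kill all such $q$, and no Weil-type bound rescues this, because the obstruction is structural: the restricted sum really is this negative. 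This is exactly the difficulty the paper resolves with its Averaging Theorem (Theorem \ref{t:averaging}): it takes \emph{several} sieves $d_0\mid d_1\mid\cdots\mid d_s$, with carefully chosen $p$-adic valuations and exponentially decaying weights $\lambda^j$, so that the weighted average $\frac{1}{\Lambda}\sum_j\lambda^j\tau(d_j,q)\ge-\delta$ for \emph{every} $q$. The whole logarithmic saving comes from how large $d_s$ must be to achieve a given $\delta$. Your sketch has no analogue of this averaging step, and without it the argument stops at a constant lower bound for $b_0$.

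Secondly, the claim that oddness plus a Weil-type estimate gives $|S(af,q)/q|=O(q^{-1/2})$ is wrong for composite $q$. Weil's $\sqrt{q}$ is only for prime $q$; for general $q$ and a degree-$k$ polynomial the sharp bound is the Chen--Nechaev estimate $S(f,q)=O_k((c(f),q)^{1/k}q^{1-1/k})$ (Lemma \ref{l:cn} of the paper). It is precisely this exponent $1-1/k$, fed as $\beta=1/k$ into the averaging theorem (yielding $d_s\ll\exp(c_1\delta^{-k})$, hence $\delta\ll(\log N)^{-1/k}$), that produces the final exponent $1/k$ --- not ``sparsity of $f(\mathbb{Z})$'' in a vague sense, and not the minor-arc Weyl exponent. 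The minor arcs are indeed not the bottleneck, as you correctly observe, but your account of the major-arc mechanism that does produce $1/k$ is not correct. Also, the oddness of $f$ is used to ensure $S_d(af,q)$ is real (so that there is no uncontrolled imaginary part in (\ref{r:key})), not to improve the size of the sum.
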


We now discuss the background and implications of that result. Denote by ${%
\mathcal{T}}(D)$ the set of all cosine polynomials with the spectrum in a
set of positive integers $D\cup \{0\}$, such that $T(x)\geq 0$, $T(0)=1$;\
and let ${\mathcal{T}}^{+}(D)$ be the subset of ${\mathcal{T}}(D)$ with
non-negative coefficients. Kamae and Mend\`{e}s France in \cite{Kamae:77}
introduced the notion of van der Corput sets (VdC sets;\ or correlative
sets), if $\func{inf}_{T\in {\mathcal{T}}(D)}b_{0}=0\,\ $($b_{0}$ is the
free coefficient). One can also define VdC$^{+}$ sets as those for which $%
\func{inf}_{T\in {\mathcal{T}}(D^{+})}b_{0}=0$. Let $\gamma (n)$, $\gamma
^{+}(n)$ be the arithmetic functions which measure how quickly a set is
becoming a van der Corput set: 
\begin{equation*}
\gamma (n)= \func{inf}_{T\in {\mathcal{T}}(D\cap \lbrack 1,n])}b_{0}
\end{equation*}
and $\gamma ^{+}(n)$ analogously for ${\mathcal{T}}^{+}(D)$ (and then $%
\gamma (n)\leq \gamma ^{+}(n)$). Theorem \ref{t:main1} can now be rephrased
as follows:\ for the sets of values of an odd integer polynomial $f$, 
\begin{equation}
\gamma ^{+}(n)=O((\log n)^{-1/k}).  \label{r:bound1}
\end{equation}

Kamae and Mend\`{e}s France, Ruzsa and Montgomery (\cite{Montgomery:94}, 
\cite{Ruzsa:81}, \cite{Ruzsa:84a}) described various characterizations of
van der Corput sets and the function $\gamma (n)$, mainly related to uniform
distribution properties of the set $D$. In particular, in \cite{Kamae:77} it
was shown that van der Corput sets are intersective sets, and Ruzsa showed
in \cite{Ruzsa:84a} that an upper bound for the function $\gamma $ is also
an upper bound for the intersective property. Following that, a Corollary of
Theorem \ref{t:main1} is the following:

\begin{corollary}
Let $f$ be an integer polynomial satisfying (\ref{a:polynomial}). Suppose
that N is an integer and that $A\subset \{1,...,N\}$ is such that the
difference between any two elements of A is never an integer value of $f$.
Then $|A|=O(N(\log N)^{-1/k})$.
\end{corollary}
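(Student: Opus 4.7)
The plan is to convert the bound $b_{0}=O((\log N)^{-1/k})$ supplied by Theorem~\ref{t:main1} into a density bound via a standard Ruzsa-type Fourier window argument. Let $A\subseteq \{1,\ldots ,N\}$ with $|A|=K$, assume that $A-A$ contains no nonzero element of $f(\mathbb{Z})$, set $F_{A}(\theta )=\sum_{a\in A}e^{2\pi ia\theta }$, and let $T$ be the cosine polynomial from Theorem~\ref{t:main1}.

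First I would evaluate $\int_{0}^{1}|F_{A}(\theta )|^{2}T(\theta )\,d\theta $ exactly. Writing $|F_{A}(\theta )|^{2}=\sum_{d}r_{A}(d)e^{2\pi id\theta }$ with $r_{A}(d)=\#\{(a,a^{\prime })\in A^{2}:a-a^{\prime }=d\}$ and applying Plancherel, the integral equals $\sum_{d}r_{A}(d)c_{-d}$, where $c_{m}$ denotes the Fourier coefficient of $T$ at frequency $m$. Because $f$ is odd, $f(\mathbb{Z})$ is symmetric, so by hypothesis $r_{A}(\pm f(j))=0$ for every $f(j)>0$; only the $d=0$ contribution survives, giving
\begin{equation*}
\int_{0}^{1}|F_{A}(\theta )|^{2}T(\theta )\,d\theta =Kb_{0}.
\end{equation*}

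Next I would bound the same integral from below by localising to the window $I=[-1/(6N),1/(6N)]$. For $\theta \in I$ and every integer $m$ with $1\leq m\leq N$, $|2\pi m\theta |\leq \pi /3$, so $\cos (2\pi m\theta )\geq 1/2$. Applied term-by-term to $\mathrm{Re}\,F_{A}(\theta )=\sum_{a\in A}\cos (2\pi a\theta )$, this yields $\mathrm{Re}\,F_{A}(\theta )\geq K/2$, hence $|F_{A}(\theta )|^{2}\geq K^{2}/4$ on $I$. Applied harmonic-by-harmonic to the definition of $T$, together with $\sum b_{f(j)}=1-b_{0}$, it yields $T(\theta )\geq (1+b_{0})/2\geq 1/2$ on $I$. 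Combining,
\begin{equation*}
Kb_{0}\geq \int_{I}|F_{A}(\theta )|^{2}T(\theta )\,d\theta \geq \frac{K^{2}}{4}\cdot \frac{1}{2}\cdot \frac{1}{3N}=\frac{K^{2}}{24N},
\end{equation*}
so $K\leq 24Nb_{0}=O(N(\log N)^{-1/k})$, which is the desired conclusion.

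No step is a genuine obstacle once Theorem~\ref{t:main1} is in hand: the reduction from the spectral bound to the density bound is routine. The conceptual ingredient is that $|F_{A}|^{2}$ and $T$ have all frequencies bounded by $N$, so that they share a coherence window of width $\asymp 1/N$ on which both are at least a fixed fraction of their maximum; the same $I$ therefore serves for both estimates. The entire content of the corollary is thus carried by the estimate on $b_{0}$.
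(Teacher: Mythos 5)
Your proof is correct. The paper itself does not give a direct argument for this corollary; it invokes Ruzsa's theorem from \cite{Ruzsa:84a} that an upper bound on $\gamma(n)$ (hence on $\gamma^{+}(n)$) transfers to an upper bound on the density of sets whose difference set avoids $D$. What you have written out is exactly the standard Fourier-analytic proof of that transfer: orthogonality kills every harmonic of $T$ except the constant, giving $\int_{0}^{1}|F_{A}|^{2}T=Kb_{0}$, while nonnegativity of $|F_{A}|^{2}T$ lets you restrict the integral to the coherence window $|\theta|\leq 1/(6N)$, on which both factors are $\gtrsim$ their peak values since all frequencies are $\leq N$. One small inefficiency: you appeal to the oddness of $f$ to argue $r_{A}(\pm f(j))=0$, but this is automatic because $r_{A}$ is an even function of $d$ (the pairs $(a,a^{\prime})$ and $(a^{\prime},a)$ are counted symmetrically), so $r_{A}(f(j))=0$ already forces $r_{A}(-f(j))=0$ regardless of $f$; this is worth noting since the transfer lemma itself holds for general $D$, not only symmetric $D$. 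Otherwise the constants check out ($|I|=1/(3N)$, $\cos\leq\pi/3$ gives the factor $1/2$ twice, $|A|\leq 24Nb_{0}$), and the argument is complete.
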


This gives another proof of the upper bound for the difference property of
odd polynomials, where the best current result (by Lucier \cite{Lucier:07},
valid for all polynomials) is $N(\log N)^{-1/(k-1+o(1))}$.

Montgomery set a problem in \cite{Montgomery:94} for finding any upper
bounds for the van der Corput property for any "interesting" sets, such as
the set of squares and more generally the set of values of an integer
polynomial. Ruzsa in \cite{Ruzsa:81} announced the $\gamma (n)=O((\log
n)^{-1/2})$ bound for the set of squares, but the proof was never published.
One of the authors in \cite{Slijepcevic:10}, \cite{Slijepcevic:12} proved
bounds $\gamma (n)\leq \gamma ^{+}(n)=O((\log n)^{-1/3})$ for the set of
perfect squares, and $\gamma (n)\leq \gamma ^{+}(n)=O((\log n)^{-1+o(1)})$
for the set of shifted primes. We also note that Theorem \ref{t:main1} can
be extended to all integer polynomials of degree $k\geq 3$, but for now with
the bound only $\gamma ^{+}(n)=O((\log \log n)^{-1/k^{2}})$ (\cite%
{Nincevic:12}).

Ruzsa showed that, by using only non-negative coefficients in the case of
squares, one can not do better than $O((\log n)^{-1})$. We extend the same
argument to show that the Theorem \ref{t:main1} is close to optimal if only
non-negative coefficients are used:

\begin{theorem}
\label{t:lower}Let $f(x)=\beta x^{k}$, $\beta >0$ an integer, and $k\geq 3$
an odd integer. Then 
\begin{equation}
\gamma ^{+}(n)\geq (1/\varphi (k)+o(1))(\log n)^{-1}.  \label{r:bound2}
\end{equation}
\end{theorem}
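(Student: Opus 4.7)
The plan is to generalize Ruzsa's argument for squares (yielding $\gamma^+(n) = \Omega((\log n)^{-1})$, as mentioned above) to odd $k$-th powers, with the new ingredient being the structure of $k$-th power residues modulo primes $p \equiv 1 \pmod k$ together with Gauss sums of order $k$. The factor $1/\varphi(k)$ will enter through the density of such primes via Dirichlet's theorem and the prime number theorem in arithmetic progressions.

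I will first establish a per-prime inequality. For a prime $p \equiv 1 \pmod k$ with $p \nmid \beta$, let $H \leq (\mathbb{Z}/p\mathbb{Z})^\times$ be the (index-$k$) subgroup of $k$-th power residues. Whenever $p \nmid j$, the value $f(j) = \beta j^k$ lies in the single coset $\beta H$, so summing $T(a/p) \geq 0$ over $a$ in one coset $g_iH$ collapses all cross-terms to a single Gauss period $\eta$. Writing $\eta = k^{-1}\bigl(-1 + \sum_{\chi^k=1,\, \chi \neq 1} \bar\chi(g_i\beta)\, g(\chi)\bigr)$ with $|g(\chi)| = \sqrt{p}$, a pigeonhole among the $k$ equispaced phases will show that for some coset $\mathrm{Re}(\eta) \leq -c\sqrt{p}/k$ with a positive constant $c$ depending only on $k$. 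Choosing $g_i$ so that $g_i\beta H$ realizes this minimum and rearranging $\sum_{a \in g_iH} T(a/p) \geq 0$ will produce
\begin{equation*}
b_0 + S_p \;\geq\; c_k/\sqrt{p}, \qquad S_p := \sum_{p \mid j,\; f(j) \leq n} b_{f(j)}.
\end{equation*}

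Next I will sum this over primes $p \equiv 1 \pmod k$ with $p \leq P$ for a parameter $P$. By PNT-in-AP and partial summation, the coefficient of $b_0$ is $\pi_{k,1}(P) \sim P/(\varphi(k)\log P)$ and the main term is $\sum 1/\sqrt{p} \sim (2/\varphi(k))\sqrt{P}/\log P$. The cumulative error $\sum_p S_p = \sum_j b_{f(j)}\, \#\{p \leq P : p \mid j\}$ will be controlled by the Erd\H os estimate $\omega(j) \ll \log j / \log\log j$ together with $j \leq (n/\beta)^{1/k}$, giving $O(\log n / \log\log n)$ independent of the distribution of the $b_{f(j)}$. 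Choosing $P$ of order $(\log n)^2$ to balance the two dominant contributions, then solving for $b_0$ and collecting the $1/\varphi(k)$ factors coming from the density of primes, should yield $b_0 \geq (1/\varphi(k)+o(1))(\log n)^{-1}$.

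I expect the main obstacle to be the per-prime step: ensuring that a Gauss period on the correct coset (determined by $\beta$) has real part $\leq -c\sqrt{p}/k$ with a constant sharp enough to propagate through the optimization. When $p \equiv 1 \pmod{2k}$ we have $-1 \in H$ and all Gauss periods are real, so the pigeonhole argument is transparent; when $p \equiv 1 \pmod k$ but $p \not\equiv 1 \pmod{2k}$, the periods come in complex conjugate pairs and one must instead pair the cosets $g_iH$ and $(-g_i)H$ (equivalently, work with $\mathrm{Re}(\eta_i)$ throughout) to recover a real-valued lower bound. A secondary point will be tracking the lower-order Mertens terms and the Erd\H os error carefully enough that exactly the stated constant $1/\varphi(k)$, rather than a slightly different value, emerges at the end of the optimization.
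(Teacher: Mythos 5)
Your proposal follows essentially the paper's route. The per-prime step is the same in content: the paper works with the Gauss periods $A_m$ over the $k$-th power residue cosets and pigeonholes via the two moment identities $\sum_m A_m = -1$ and $\sum_m A_m^2 = p - s$ (with $s = (p-1)/k$) to locate a coset with $A_m \leq -\sqrt{s/(k-2)}$; this is a rewriting of the character-sum/Gauss-sum computation you sketch, and it yields the per-prime inequality $b_0 + S_p \geq 1/\sqrt{p}$, i.e.\ your constant $c_k$ comes out as $1$. The averaging over primes $p \equiv 1 \pmod{k}$ via PNT in arithmetic progressions is also the same, with the same order of the cutoff $P \asymp (\log n)^2$. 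Two remarks. First, your worry about the case $p \equiv 1 \pmod{k}$ but $p \not\equiv 1 \pmod{2k}$ is unfounded: since $k$ is odd, $-1 = (-1)^k$ is automatically a $k$-th power, so $-1 \in H$ and every Gauss period is already real. This is precisely the paper's observation ``$a \sim -a$'' and is the structural reason the construction is confined to odd $k$. Second, the one genuine (if cosmetic) divergence is in the final summation: the paper multiplies the per-prime inequality by $\log p$ before summing, so the cumulative error collapses to $\sum_j b_{f(j)} \sum_{p \mid j} \log p \leq \sum_j b_{f(j)} \log j \leq \log n$, requiring nothing beyond $\prod_{p\mid j} p \leq j$ and $\sum b_{f(j)} \leq 1$; your unweighted sum instead pulls in the Hardy--Ramanujan maximal-order bound $\omega(j) \ll \log j / \log\log j$ and an extra $\log\log n$ bookkeeping. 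Both roads reach $(1/\varphi(k)+o(1))(\log n)^{-1}$.
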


It is hoped that one can improve the van der Corput and intersective sets
bounds by constructing cosine polynomials also with negative coefficients.
Ruzsa and Matolcsi have recently announced progress in this direction in the
case of perfect squares;\ and also discussed this in a more general setting
of commutative groups (\cite{Matolcsi:12a}).

The Theorems \ref{t:main1}, \ref{t:lower} have an ergodic-theoretical
interpretation, as was noted in \cite{Rabar:12}:

\begin{corollary}
Let $f$ be an integer polynomial of a degree $k\geq 3$ satisfying (\ref%
{a:polynomial}), $H$ an arbitrary real Hilbert space, $U$ an unitary
operator on $H$, and $P$ the projection to the kernel of $U-I$. If $x\in H$
is such that $Px\not=0$, then there exists a positive integer $f(j)$ such
that $(U^{f(j)}x,x)>0$.

Furthermore, if $(Px,x)>\gamma ^{+}(n)(x,x)$, then there exists such $%
f(j)\leq n$, where $\gamma ^{+}(n)$ is the best such bound valid universally
for all $H,U$, with bounds (\ref{r:bound1}), and (\ref{r:bound2}) in the
case $f(x)=\alpha _kx^{k}$.
\end{corollary}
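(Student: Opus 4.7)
The plan is to apply the spectral theorem to $U$ and then pair the spectral measure of $x$ against the cosine polynomial $T$ produced by Theorem~\ref{t:main1}. Since $H$ is real, standard spectral calculus yields a positive, symmetric Borel measure $\mu_{x}$ on $[-1/2,1/2]$ satisfying
\[
(U^{m}x,x) = \int \cos(2\pi m t)\, d\mu_{x}(t) \quad \text{for every } m \in \mathbb{Z},
\]
with total mass $\mu_{x}([-1/2,1/2]) = (x,x)$. The projection $P$ onto $\ker(U-I)$ is precisely the spectral projection $E(\{0\})$ at the eigenvalue $1$, so $\mu_{x}$ has an atom at $0$ of mass $(Px,x)$; writing $\mu_{x} = (Px,x)\delta_{0} + \nu_{x}$ with $\nu_{x} \geq 0$ will isolate the contribution of $P$.

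Next I would take the polynomial $T$ supplied by Theorem~\ref{t:main1} and integrate it against $\mu_{x}$. By linearity,
\[
\int T(t)\, d\mu_{x}(t) \;=\; b_{0}(x,x) + \sum_{0 < f(j) \leq n} b_{f(j)}\, (U^{f(j)}x,x),
\]
while the decomposition of $\mu_{x}$, together with $T \geq 0$ and the normalization $T(0) = \sum b_{j} = 1$, gives the lower bound $\int T\, d\mu_{x} \geq T(0)(Px,x) = (Px,x)$. Combining the two yields the key inequality
\[
\sum_{0 < f(j) \leq n} b_{f(j)}\, (U^{f(j)}x,x) \;\geq\; (Px,x) - b_{0}(x,x).
\]

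Both assertions of the corollary follow from this inequality. For the quantitative claim, given $(Px,x) > \gamma^{+}(n)(x,x)$, pick $T \in \mathcal{T}^{+}(f(\mathbb{Z}_{+}) \cap [1,n])$ with $b_{0} < (Px,x)/(x,x)$; the right-hand side is then strictly positive, forcing at least one term $(U^{f(j)}x,x)$ with $f(j) \leq n$ to be positive. The qualitative claim then follows by taking $n$ sufficiently large, since $Px \neq 0$ is equivalent to $(Px,x) = \|Px\|^{2} > 0$, and Theorem~\ref{t:main1} guarantees $\gamma^{+}(n) \to 0$.

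There is no substantive obstacle here: the whole argument is a clean spectral translation of Theorem~\ref{t:main1}, which carries all the arithmetic content. The only minor points worth care are the use of a real Hilbert space (which makes $\mu_{x}$ symmetric and justifies expanding $(U^{m}x,x)$ in cosines rather than exponentials) and the identification $(Px,x) = \mu_{x}(\{0\})$ via the spectral projection at the eigenvalue~$1$.
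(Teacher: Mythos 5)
Your spectral-measure argument is correct and is the natural proof: realize $(U^{m}x,x)$ as the Fourier coefficients of the (symmetric, positive) spectral measure $\mu_x$ of $x$, identify $(Px,x)$ with the atom $\mu_x(\{0\})$, and pair $\mu_x$ against a polynomial $T \in \mathcal{T}^{+}$ from Theorem~\ref{t:main1}; the non-negativity of the $b_{f(j)}$ is exactly what lets you promote $\sum b_{f(j)}(U^{f(j)}x,x) > 0$ to the existence of a single positive correlation, which is why $\gamma^{+}$ and not $\gamma$ appears. The paper itself gives no proof here but defers to \cite{Rabar:12}, so there is no in-text argument to compare against; your proof is the standard one and almost certainly coincides with what is in that reference. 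One thing you do not address is the parenthetical claim that $\gamma^{+}(n)$ is the \emph{best} constant valid universally over all $H,U$: your argument shows only that $\gamma^{+}(n)$ is an admissible threshold (the supremum of $(Px,x)/(x,x)$ over examples where all $(U^{f(j)}x,x)\leq 0$, $f(j)\leq n$, is at most $\gamma^{+}(n)$). Establishing that this supremum actually equals $\gamma^{+}(n)$ requires a converse construction — e.g.\ via a Hahn--Banach or Herglotz/Bochner duality realizing a near-extremal positive measure as the spectral measure of a vector in some $(H,U)$ — which you would need to supply, or cite \cite{Rabar:12} for, to cover the full statement.
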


\textbf{The structure of the paper. }We first introduce some notation
related to the polynomial $f$. The degree of $f$ will be always denoted by $%
k $, and let $l\geq 1$ be the smallest index so that $\alpha _{l}>0$. Let $%
c(f)=(\alpha _{k},...,\alpha _{l})$ be the content of the polynomial.
Without loss of generality we always assume that for $x\geq 1$, $f(x)\geq 1$%
, and that $f(j)$, $j\geq 1$ is an increasing sequence (if not, we find the
smallest $j_{0}$ such that it holds for $j\geq j_{0}$, and modify all the
estimates by skipping the first $j_{0}$ values of $f$, this impacting only
the implicit constants in the estimates).

Let $F_{n}(x)$ be the normed Fej\'{e}r's kernel 
\begin{equation*}
F_{n}(x)=\frac{1}{n}+2\sum_{j=1}^{n}\left( \frac{1}{n}-\frac{j}{n^{2}}%
\right) \cos (2\pi jx){\text{,}}
\end{equation*}%
and then $F_{n}(x)\geq 0$, and $F_{n}(0)=1$. The key tool in our
construction will be, following the idea of I. Ruzsa, construction of a
polynomial of the type (\ref{r:type}) which approximates $F_{n}(x)$. We may
further restrict "allowable"\ indices $j$ to those with an integer $d$ as a
factor, and define 
\begin{equation}
G_{n,d}(x)=\frac{2}{K}\sum_{\alpha _{k}(dj)^{k}\leq n}\alpha
_{k}kd^{k}j^{k-1}\left( \frac{1}{n}-\frac{\alpha _{k}(dj)^{k}}{n^{2}}\right)
\cos (2\pi f(dj)x),  \label{r:defg}
\end{equation}%
where $K$ is chosen so that $G_{n,d}(0)=1$ ($K$ will be close to $1$ for $n$
large enough;\ and will be estimated in Section 2).

The structure of the proof is as follows:\ let $S(f,q)$ be the complete
trigonometric sum 
\begin{equation}
S(f,q)=\sum_{s=0}^{q-1}e(f(s)/q),  \label{r:complete}
\end{equation}%
where $e(x)=\func{exp}(2\pi ix)$. We will need the reduced complete
trigonometric sum over the multipliers of an integer $d$: 
\begin{equation*}
S_{d}(f,q)=\sum_{s=0}^{q-1}e(f(ds)/q).
\end{equation*}%
For $q$ small (the major arc estimates), we show that 
\begin{equation}
G_{n,d}(a/q+\kappa )=\frac{1}{q}S_{d}(af,q)F_{n}(\kappa )+{\text{error term,}%
}  \label{r:key}
\end{equation}%
where the error term is small for small\ $\kappa $ and large $n$ as compared
to $q,d$. For large $q$, we show by partial summation and by using the
well-known Vinogradov's trigonometric sum estimates that $G_{n,d}(x)$ is
small. The key step is the averaging step:\ we choose the constants $%
d_{0},...,d_{s}$ and normalized weights $w_{0},\ldots ,w_{s}$ such that for
any $q$, $\sum_{j}w_{j}S_{d_{j}}(af,q)\geq -\delta $, $\delta =O((\log
N)^{-1/k})$. Here $N$ is the size of the largest non-zero coefficient in the
family of polynomials $G_{n,d_{j}}$. The estimate $b_{0}=O((\log N)^{-1/k})$
follows from this and the size of the error term.

Unfortunately, for polynomials which are not odd, this approach seems to
fail as (\ref{r:key}) does not hold. Namely, there is another factor
difficult to control if one can not a-priori claim that the imaginary part
of $S(f,q)$ is $0$, as is the case for odd polynomials.

We prove Theorem \ref{t:main1} in Sections 2-5, and Theorem \ref{t:lower} in
Section 6.

\section{The major arcs}

We will use the notation $O_{k}(.)$, $\ll _{k}$, $O_{f}(.)$, $\ll _{f}$when
the implicit constant depends implicitly on the degree $k$ or the
coefficients of the polynomial $f$ (including the degree) respectively. We
will often use the following relations:\ If $x,y\geq 1$ are integers such
that $f(x),f(y)\ll _fn$, then 
\begin{eqnarray}
f(x)&=&O_{f}(x^{k})=  \label{r:pol1} \\
&=&\alpha _{k}x^{k}+O_{f}(n^{1-1/k}),  \label{r:pol1A} \\
|f(x)-f(y)|&=&|x-y|O_{f}(n^{1-1/k})=  \label{r:pol2} \\
&=&\alpha _{k}k|x-y|x^{k-1}+|x-y|^{2}O_{f}(n^{1-2/k}).  \label{r:pol2A}
\end{eqnarray}
The relations above can be computed easily by using $\beta =2(|\alpha
_{k-1}|+\cdots +|\alpha _1|)/\alpha _k$ and the relation $\alpha _kx^k\leq
2f(x)$ if $x\geq 1$ and $x\geq \beta $.

The following result by Chen \cite{Chen:77} and Nechaev \cite{Nechaev:75}
gives a bound for the complete trigonometric sums.

\begin{lemma}
\label{l:cn}Let $f$ be an integer polynomial of a degree $k\geq 2$. Then 
\begin{equation*}
S(f,q)=O_{k}((c(f),q)^{1/k}q^{1-1/k}).
\end{equation*}
\end{lemma}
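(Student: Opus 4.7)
The plan is to follow the classical route: reduce to prime-power moduli via the Chinese Remainder Theorem, peel off the $p$-part of the content, and then handle primitive sums $S(g,p^s)$ with $p \nmid c(g)$ using the Weil bound for $s=1$ and a Hensel-type lifting for $s\geq 2$.

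First, multiplicativity. If $q = q_1 q_2$ with $(q_1,q_2)=1$, the CRT substitution $s \equiv q_2\bar{q}_2 s_1 + q_1 \bar{q}_1 s_2 \pmod{q}$ factors the sum as $S(f,q) = S(f_1,q_1)\,S(f_2,q_2)$, where $f_j(x) = f(u_j x)$ for a unit $u_j$ modulo $q_j$; since $(c(f_j),q_j) = (c(f),q_j)$ and $(c(f),q) = (c(f),q_1)(c(f),q_2)$, it suffices to prove the bound for $q=p^r$. Next, let $p^m = (c(f),p^r)$. Because $f$ has no constant term, $p^m$ divides every nonzero coefficient, so $f(x) = p^m g(x)$ for an integer polynomial $g$ with $p\nmid c(g)$ (the case $m\geq r$ being trivial), and regrouping residues modulo $p^{r-m}$ yields $S(f,p^r) = p^m S(g, p^{r-m})$. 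It therefore remains to show $|S(g,p^s)| \ll_k p^{s(1-1/k)}$ whenever $p\nmid c(g)$.

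For $s=1$ the Weil bound gives $|S(g,p)| \leq (k-1)p^{1/2} \leq (k-1)p^{1-1/k}$. For $s\geq 2$, write $x = u + p^t v$ with $t = \lceil s/2\rceil$, $0\leq u < p^t$, $0\leq v < p^{s-t}$; Taylor expansion modulo $p^s$ gives $g(x) \equiv g(u) + p^t v\,g'(u) \pmod{p^s}$, and summing in $v$ annihilates every $u$ unless $p^{s-t} \mid g'(u)$, whence
\begin{equation*}
S(g,p^s) = p^{s-t}\sum_{\substack{0\leq u<p^t\\ p^{s-t}\mid g'(u)}} e(g(u)/p^s).
\end{equation*}
Bounding the count of such $u$ by Hensel's lemma applied to $g'$ (which has degree $k-1$ and content still coprime to $p$), and iterating, yields the exponent $1-1/k$.

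The main obstacle is this last step: simple roots of $g'$ modulo $p$ lift uniquely and contribute optimally, but multiple roots of $g'$ force a recursive lifting in which one loses a factor of $p$ per stage while saving $p^{1-1/(k-1)}$ from the reduced degree of the derivative, and the bookkeeping has to balance to exactly $p^{s/k}$. Setting up an induction on $s$ (or equivalently on the degree of the derivative being lifted) and combining the resulting prime-power bound with the multiplicative reduction of the first step delivers the claimed estimate, with an implicit constant depending only on $k$.
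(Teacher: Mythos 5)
The paper itself supplies no proof for this lemma: it states the bound and attributes it to Chen \cite{Chen:77} and Nechaev \cite{Nechaev:75}, so there is no in-paper argument to compare against, and your attempt can only be assessed against the classical proofs of that result. Your skeleton is indeed the standard route: CRT multiplicativity, extraction of the $p$-part of the content, Weil for prime modulus, and a stationary-phase/Hensel lifting for higher prime powers. The first three steps are essentially sound (one small slip: the CRT factorization yields $S(u_1 f,q_1)\,S(u_2 f,q_2)$ for units $u_j$ modulo $q_j$, i.e.\ scalar multiples of $f$, not compositions $f(u_j x)$; in either case $(c(\cdot),q_j)=(c(f),q_j)$, so the reduction survives). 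The content-peeling identity $S(f,p^r)=p^m S(g,p^{r-m})$ with $p\nmid c(g)$, and the Weil estimate $|S(g,p)|\leq (k-1)p^{1/2}\leq(k-1)p^{1-1/k}$ for $s=1$, are also correct.

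The lifting step contains a genuine error, though: you assert that $g'$ has ``content still coprime to $p$'', and this does not follow from $p\nmid c(g)$. Differentiation multiplies each coefficient $\alpha_i$ by $i$; for instance $g(x)=x^k$ with $p\mid k$ has $g'(x)=kx^{k-1}$, whose content is divisible by $p$ even though $c(g)=1$. When $p\mid c(g')$, the congruence $p^{s-t}\mid g'(u)$ is satisfied by far too many $u$ (in the extreme, by all of them), so the Hensel-type count you invoke is unavailable and the displayed identity only gives back the trivial bound. Handling this degeneracy --- peeling the $p$-power from $c(g')$, restarting the recursion at a lower modulus, and in parallel treating multiple roots of $g'$ --- is precisely where the nontrivial work of Chen and Nechaev lies, and it is what produces the exponent $1-1/k$ with a constant depending only on $k$. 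Your closing paragraph acknowledges that a recursion is needed but leaves its setup vague, and the stated per-stage saving of $p^{1-1/(k-1)}$ does not in fact balance to the required $p^{s/k}$. As written, the proposal captures the correct framework but leaves the essential difficulty of the prime-power case unresolved.
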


We will need the next bound for the content of the polynomial $f$ when it
goes over the multipliers of an integer $d$:

\begin{lemma}
\label{l:content}Let $f$ be an integer polynomial of a degree $k\geq 1$, $d$
an integer and $g(x)=f(dx)$, for all $x$. Then 
\begin{equation*}
c(g)\leq d^{l}|\alpha _{l}|^{k}.
\end{equation*}
\end{lemma}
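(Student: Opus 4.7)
The plan is to proceed by straightforward inspection of the coefficients of $g$. Write $g(x) = f(dx) = \sum_{j=1}^{k} (\alpha_j d^j) x^j$, so that the coefficient of $x^j$ in $g$ is simply $\alpha_j d^j$. In the setting of the paper $d$ is a positive integer (it appears as a divisor of allowable indices in the definition of $G_{n,d}$), and $d^j > 0$, so $\alpha_j d^j$ has the same sign as $\alpha_j$ and vanishes exactly when $\alpha_j = 0$. Consequently, the smallest index whose coefficient in $g$ is positive is the same $l$ that governs $f$.

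Next I would invoke the definition of the content. Since $c(g)$ is the greatest common divisor of $\alpha_k d^k, \alpha_{k-1} d^{k-1}, \ldots, \alpha_l d^l$, it must divide every one of these integers. In particular it divides the last one, $\alpha_l d^l$, which forces
\begin{equation*}
c(g) \leq |\alpha_l d^l| = d^l |\alpha_l|.
\end{equation*}

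Finally, because $\alpha_l$ is a nonzero integer we have $|\alpha_l| \geq 1$, and hence $|\alpha_l| \leq |\alpha_l|^k$ for any $k \geq 1$. Combining this with the previous display yields $c(g) \leq d^l |\alpha_l|^k$, which is the claimed bound.

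There is no real obstacle here; the inequality in the lemma is in fact a weakening of the sharper estimate $c(g)\le d^l|\alpha_l|$ that falls out directly from divisibility, and the extra power $k$ on $|\alpha_l|$ is harmless since $|\alpha_l|\ge 1$. The only minor point worth flagging is the verification that the privileged index $l$ does not shift when passing from $f$ to $g$, which is immediate from $d\ge 1$.
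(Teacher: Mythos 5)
Your proof is correct and considerably simpler than the paper's. The paper proceeds by first reducing to a primitive polynomial, establishing $c(g)\leq d^{l}(d,\alpha_{l})^{k-l}$ in that case, and then multiplying back by $c(f)\leq|\alpha_{l}|$; your argument avoids this detour entirely. The key observation that the content of $g$ must divide the single coefficient $\alpha_{l}d^{l}$ immediately yields the sharper bound $c(g)\leq d^{l}|\alpha_{l}|$, of which the lemma's stated $d^{l}|\alpha_{l}|^{k}$ is just a crude weakening via $|\alpha_{l}|\geq 1$ and $k\geq 1$. You correctly flagged the only delicate point, namely that the privileged index $l$ does not shift when passing from $f$ to $g$; this holds because $d$ is a positive integer in the paper's usage, so $\alpha_{j}d^{j}$ has the same sign as $\alpha_{j}$. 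In short, your route is more elementary, proves a strictly stronger inequality, and dispenses with the paper's reduction to the primitive case.
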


\begin{proof}
We first assume that $f$ is a primitive polynomial (i.e. $c(f)=1$). One can
than easily show that $c(g)\leq d^{l}(d,\alpha _{l})^{k-l}.$ If $f$ is not
primitive, we apply the previous result to the polynomial $f/c(f)$ and get 
\begin{equation*}
c(f/c(f))\leq d^{l}(d,\alpha _{l}/c(f))^{k-l}\leq d^{l}|\alpha _{l}|^{k-l}.
\end{equation*}%
The claim now follows from $c(g)=c(f/c(f))c(f)$ and $c(f)\leq |\alpha _{l}|$.
\end{proof}

We now state the major arcs estimate.

\begin{proposition}
Let $G_{n,d}(x)$ be a trigonometric polynomial as in (\ref{r:defg}) for some
integer polynomial $f$ of a degree $k\geq 3$ satisfying (\ref{a:polynomial}%
), and $n,d$ positive integers. Let $x=a/q+\kappa $. Then 
\begin{equation*}
G_{n,d}(x)=\frac{1}{q}S_{d}(af,q)F_{n}(\kappa )+O_{f}(dqn^{-1/k}(1+|\kappa
|n)){\text{.}}
\end{equation*}
\end{proposition}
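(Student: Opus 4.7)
The plan is to split the sum defining $G_{n,d}$ according to the residue class of $j$ modulo $q$, recognise the leading part of each partial sum as a Riemann approximation of a continuous Fej\'er integral, and use the oddness of $f$ to eliminate imaginary contributions.

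First, write every $j$ in the valid range as $j=qm+s$ with $0\le s<q$. Since $d(qm+s)\equiv ds\pmod q$ and $f$ has no constant term, $f(d(qm+s))\equiv f(ds)\pmod q$, so
\[
G_{n,d}(a/q+\kappa)=\operatorname{Re}\sum_{s=0}^{q-1}e(af(ds)/q)\,\tilde T_s(\kappa),\quad\tilde T_s(\kappa)=\sum_m h(qm+s)\,e(f(d(qm+s))\kappa),
\]
where $h(j)=(2/K)\alpha_k k d^k j^{k-1}(1/n-\alpha_k(dj)^k/n^2)$ is the weight in (\ref{r:defg}). The central reduction is to show, uniformly in $s$, that
\[
\tilde T_s(\kappa)=\frac{2}{qK}\int_0^n\Bigl(\tfrac1n-\tfrac{u}{n^2}\Bigr)e(u\kappa)\,du+O_f\bigl(dn^{-1/k}(1+|\kappa|n)\bigr).
\]
I would prove this in two steps: (i) replace $e(f(dj)\kappa)$ by $e(\alpha_k(dj)^k\kappa)$, using $|f(dj)-\alpha_k(dj)^k|=O_f(n^{1-1/k})$ from (\ref{r:pol1A}) together with $|e(y)-1|\le 2\pi|y|$ and $\sum_m h(qm+s)\le 1$, at cost $O_f(|\kappa|n^{1-1/k})$; (ii) compare $\sum_m g(qm+s)$, for $g(j)=h(j)\,e(\alpha_k(dj)^k\kappa)$, to $(1/q)\int g(j)\,dj$ via the elementary bound $|\sum_m g(qm+s)-(1/q)\int g|\le\int|g'|$, and evaluate the integral by the change of variables $u=\alpha_k(dj)^k$, $du=\alpha_k k d^k j^{k-1}\,dj$.

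The key computation is $\int|g'|\le\int|h'|+2\pi|\kappa|\int h(j)\,\alpha_k k d^k j^{k-1}\,dj$: unimodality of $h$ with $\max h=O_f(dn^{-1/k})$ yields $\int|h'|=O_f(dn^{-1/k})$, and the same change of variables shows $\int h\cdot\alpha_k k d^k j^{k-1}\,dj=O_f(dn^{1-1/k})$, giving $\int|g'|=O_f(dn^{-1/k}(1+|\kappa|n))$. For the final assembly, oddness of $f$ forces $f(d(q-s))\equiv -f(ds)\pmod q$, so $S_d(af,q)\in\mathbb R$; summing over $s$ and taking real parts cancels the imaginary part of the Fej\'er integral and leaves $(qK)^{-1}S_d(af,q)F_n^c(\kappa)$, with $F_n^c(\kappa)=2\int_0^n(1/n-u/n^2)\cos(2\pi u\kappa)\,du$. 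The residual discrepancies $F_n(\kappa)-F_n^c(\kappa)=O(\min(\kappa^2,1/n^2))$ (visible from the closed forms $F_n=n^{-2}\sin^2(\pi n\kappa)/\sin^2(\pi\kappa)$ and $F_n^c=\sin^2(\pi n\kappa)/(\pi n\kappa)^2$) and $K=1+O_f(dn^{-1/k})$ (from the same Riemann comparison at $\kappa=0$) contribute corrections comfortably inside the claimed error after multiplication by $|S_d(af,q)|/q\le 1$. The hard part is step (ii): the $(1+|\kappa|n)$ factor is forced by the phase derivative of $e(\alpha_k(dj)^k\kappa)$, and matching the precise $dn^{-1/k}$ and $d|\kappa|n^{1-1/k}$ contributions simultaneously requires using $\alpha_k(dj)^k\le n$ carefully on the support of $h$ together with the exact cancellation in the change of variables.
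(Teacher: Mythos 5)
Your decomposition by residue classes $j\equiv s\pmod q$, the reliance on $f(d(j+q))\equiv f(dj)\pmod q$, and the use of oddness to force $S_d(af,q)\in\mathbb R$ are all the same as in the paper, and the error budget you track (per-class error $O_f(dn^{-1/k}(1+|\kappa|n))$, times $q$ classes) is correct. The technical route inside each class is genuinely different, though. The paper stays entirely discrete: it reinterprets the weight $\alpha_k qkd^kj^{k-1}(1/n-\alpha_k(dj)^k/n^2)$ as (approximately) a bracketed sum $\sum_{t=f(dj)}^{f(d(j+q))-1}(1/n-t/n^2)$, replaces the phase $e(f(dj)\kappa)$ by $e(t\kappa)$ for $t$ in that bracket, shifts the truncation, and lands exactly on the complex Fej\'er sum $F_n^*(\kappa)$, whose real part is $F_n(\kappa)$ on the nose — no further comparison is needed. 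You instead first replace the phase by its leading monomial $e(\alpha_k(dj)^k\kappa)$ (using (\ref{r:pol1A})), then pass from the $j$-sum to $\frac1q\int g$ via the one-sided Riemann bound $|\sum_m g(qm+s)-\frac1q\int g|\le\int|g'|$, and finally change variables $u=\alpha_k(dj)^k$ so the Jacobian absorbs the weight and produces the \emph{continuous} Fej\'er kernel $F_n^c$. That buys you a cleaner conceptual picture (the discrete sum really is a Riemann sum for the Fej\'er integral), at the cost of one extra comparison $F_n\leftrightarrow F_n^c$ at the end. Your estimate $F_n-F_n^c=O(\min(\kappa^2,1/n^2))$ (from the closed forms) is correct and well within the $O_f(dqn^{-1/k}(1+|\kappa|n))$ budget, and your bounds $\int|h'|=O_f(dn^{-1/k})$ from unimodality, $\int h\cdot\alpha_kkd^kj^{k-1}\,dj=O_f(dn^{1-1/k})$, and $K=1+O_f(dn^{-1/k})$ are all right; you should also note, as the paper does implicitly via its WLOG $dqn^{-1/k}\ll_f 1$, that the boundary mismatch between $\alpha_kd^km^k$ and $n$ after the change of variables ($O_f(d^2n^{-2/k})$) is absorbed only under that standing assumption. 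In short: correct, same structural skeleton, but a continuous-approximation variant of the paper's discrete-bracketing argument.
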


\begin{proof}
Without loss of generality we assume that $dqn^{-1/k}\leq \alpha _{k}^{-1/k}$
(otherwise the error term is of the order $1$ and the claim is trivial). By
writing ${\mathrm{Re}}\,e(y)$ instead of $\cos (2\pi y)$ and appropriate
grouping, we get 
\begin{equation*}
G_{n,d}(x)={\mathrm{Re}}\,\sum_{s=0}^{q-1}\frac{1}{q}e(f(ds)a/q)\frac{2}{K}%
\sum_{\substack{ \alpha _{k}(dj)^{k}\leq n  \\ j\equiv s({\mathrm{mod}}\,q)}}%
\alpha _{k}qkd^{k}j^{k-1}\left( \frac{1}{n}-\frac{\alpha _{k}(dj)^{k}}{n^{2}}%
\right) e(f(dj)\kappa ).
\end{equation*}%
We fix all the parameters and constants. Let $B/2$ be the inner sum in the
expression above, $A=B/K,$ and let 
\begin{eqnarray*}
C &=&2\sum_{\substack{ \alpha _{k}(dj)^{k}\leq n  \\ j\equiv s({\mathrm{mod}}%
\,q)}}e(f(dj)\kappa )\sum_{t=f(dj)}^{f(d(j+q))-1}\left( \frac{1}{n}-\frac{t}{%
n^{2}}\right) , \\
D &=&2\sum_{\substack{ f(dj)\leq n  \\ j\equiv s({\mathrm{mod}}\,q)}}%
e(f(dj)\kappa )\sum_{t=f(dj)}^{f(d(j+q))-1}\left( \frac{1}{n}-\frac{t}{n^{2}}%
\right) , \\
F_{n}^{\ast }(\kappa ) &=&\frac{1}{n}+2\sum_{t=1}^{n}\left( \frac{1}{n}-%
\frac{t}{n^{2}}\right) e(t\kappa ).
\end{eqnarray*}%
Note that $F_{n}(x)={\mathrm{Re}}\,F_{n}^{\ast }(x)$, but $F_{n}^{\ast }$
has in general a non-zero imaginary part.

If $m$ is chosen so that $\alpha _{k}(dm)^{k}\leq n<\alpha _{k}(d(m+1))^{k}$%
, then one easily gets 
\begin{equation*}
|\alpha _{k}d^{k}m^{k}-n|\leq \alpha _{k}d^{k}((m+1)^{k}-m^{k})\leq \alpha
_{k}k2^{k-1}d(dm)^{k-1}.
\end{equation*}%
We now have%
\begin{equation}
\alpha _{k}d^{k}m^{k}=n+O_{f}(dn^{1-1/k}).  \label{r:estimatemm}
\end{equation}%
Using that and $\sum_{1\leq j\leq m}j^{k-1}=(1/k)m^{k}+O_{k}(m^{k-1})$, we
estimate $K$: 
\begin{eqnarray}
K &=&2\sum_{\alpha _{k}(dj)^{k}\leq n}\alpha _{k}kd^{k}j^{k-1}\left( \frac{1%
}{n}-\frac{\alpha _{k}(dj)^{k}}{n^{2}}\right) =  \notag \\
&=&\frac{2}{n}\alpha _{k}d^{k}(m^{k}+O_{k}(m^{k-1}))-\frac{1}{n^{2}}\alpha
_{k}^{2}d^{2k}(m^{2k}+O_{k}(m^{2k-1}))=  \notag \\
&=&1+O_{f}(dn^{-1/k}){\text{.}}  \label{r:DD}
\end{eqnarray}%
Similarly, by using the elementary fact that 
\begin{equation}
\sum_{\substack{ 1\leq j\leq m  \\ j\equiv s({\mathrm{mod}}\,q)}}%
j^{k-1}=O_{k}\left( \frac{1}{q}m^{k}+m^{k-1}\right) {\text{,}}
\label{r:sumpol}
\end{equation}%
one gets that $|B|\leq 1+O_{f}(dqn^{-1/k})$. The assumption $dqn^{-1/k}\ll
_{f}1$ implies $B=O_{f}(1)$, thus 
\begin{equation}
A-B=B(1/K-1)=O_{f}(dn^{-1/k}){\text{.}}  \label{r:AB}
\end{equation}

If $1\leq j\leq m$ and $f(dj)\leq t\leq f(d(j+q))-1$, then (\ref{r:pol1A})\
and (\ref{r:pol2}) imply that 
\begin{equation*}
t=\alpha _{k}(dj)^{k}+O_{f}(dqn^{1-1/k}){\text {.}}
\end{equation*}
Using that and (\ref{r:pol2A}), we get 
\begin{eqnarray}
&&\sum _{t=f(dj)}^{f(d(j+q))-1}\left (\frac{1}{n}-\frac{t}{n^{2}}\right )= 
\notag \\
&=&(f(d(j+q))-f(dj))\left (\frac{1}{n}-\frac{\alpha _{k}(dj)^{k}}{n^{2}}+%
\frac{O_{f}(dqn^{1-1/k})}{n^{2}}\right )=  \notag \\
&=&\left (\alpha _{k}qkd^{k}j^{k-1}+O_{f}(d^{2}q^{2}n^{1-2/k})\right )\left (%
\frac{1}{n}-\frac{\alpha _{k}(dj)^{k}}{n^{2}}+O_{f}(dqn^{-1-1/k})\right )= 
\notag \\
&=&\alpha _{k}qkd^{k}j^{k-1}\left (\frac{1}{n}-\frac{\alpha _{k}(dj)^{k}}{%
n^{2}}\right )+\alpha
_{k}qkd^{k}j^{k-1}O_{f}(dqn^{-1-1/k})+O_{f}(d^{2}q^{2}n^{-2/k}){\text {.}} 
\notag
\end{eqnarray}
By summing the previous over all the summands "$j$" in the definition of $C$%
, we get 
\begin{eqnarray}
|B-C|&=&O_{f}(dqn^{-1-1/k})\sum _{\substack{ \alpha _{k}(dj)^{k}\leq n  \\ %
j\equiv s({\mathrm{mod}}\,q)}}\alpha
_{k}qkd^{k}j^{k-1}+O_{f}(d^{2}q^{2}n^{-2/k})\sum _{\substack{ \alpha
_{k}(dj)^{k}\leq n  \\ j\equiv s({\mathrm{mod}}\,q)}}1,  \notag
\end{eqnarray}
Now (\ref{r:sumpol}) implies 
\begin{eqnarray}
B-C=O_{f}(dqn^{-1/k}){\text {.}}  \label{r:BBCC}
\end{eqnarray}

From (\ref{r:estimatemm}), as $\alpha _{k}d^{k}(m+q)^{k}-\alpha
_{k}d^{k}m^{k}\leq \alpha _{k}d^{k}qk(m+q)^{k-1}$ and $d(m+q)\ll _{f}n^{1/k}$%
, we get $\alpha _{k}d^{k}(m+q)^{k}=n+O_{f}(dqn^{1-1/k})$. Therefore (\ref%
{r:pol1A}) implies 
\begin{equation}
f(d(m+q))=n+O_{f}(dqn^{1-1/k}).  \label{r:CD1}
\end{equation}%
Choose $m_{\ast }$ so that $f(dm_{\ast })\leq n<f(d(m_{\ast }+1))$. Assume
that $m_{\ast }\leq m$ (the second case is proved analogously). If $%
f(d(m_{\ast }+1))\leq t\leq f(d((m+q))-1$, then (\ref{r:CD1}) implies 
\begin{equation}
t=n+O_{f}(dqn^{1-1/k}).  \label{r:CD2}
\end{equation}%
Similarly as before, one shows that 
\begin{equation}
f(d(m_{\ast }+1))=n+O_{f}(dn^{1-1/k})  \label{r:CD3}
\end{equation}%
and 
\begin{equation}
f(d(m_{\ast }+q))=n+O_{f}(dqn^{1-1/k}).  \label{r:CD4}
\end{equation}%
Now, $C$ and $D$ only differ in the number of summands $(1/n-t/n^{2})$, thus
by (\ref{r:CD2}), one gets 
\begin{equation*}
|C-D|\leq 2\sum_{t=f(d(m_{\ast }+1))}^{f(d((m+q))-1}\left\vert \frac{1}{n}-%
\frac{t}{n^{2}}\right\vert =O_{f}(dqn^{-1-1/k})(f(d(m+q))-f(d(m_{\ast }+1))){%
\text{.}}
\end{equation*}%
Using (\ref{r:CD1}) and (\ref{r:CD3}), we deduce that 
\begin{equation}
C-D=O_{f}(dqn^{-1/k}).  \label{r:CCDD}
\end{equation}

We now estimate $D-F_{n}^{\ast }(\kappa )$. If $1\leq j\leq m_*$ and $%
f(dj)\leq t\leq f(d(j+q))-1$, the relations $|e(x)-e(y)|\leq 2\pi |x-y|$ and
(\ref{r:pol2}) imply 
\begin{equation*}
|e(f(dj)\kappa )-e(t\kappa )|\leq 2\pi |\kappa
|(f(d(j+q))-f(dj))=O_{f}(|\kappa |dqn^{1-1/k}){\text {.}}
\end{equation*}
Comparing $D$ and $F_{n}^{\ast }(\kappa )$, we see that 
\begin{eqnarray}
|D-F_{n}^{\ast }(\kappa )|&\leq &\frac{1}{n}+2\sum _{t=1}^{f(ds)-1}\left |%
\frac{1}{n}-\frac{t}{n^{2}}\right ||e(t\kappa )|  \notag \\
&&+2\sup _{\substack{ 1\leq j\leq m_*,\,j\equiv s\,({\mathrm{mod}}\,q)  \\ %
f(dj)\leq t\leq f(d(j+q))-1}}|e(f(dj)\kappa )-e(t\kappa )|\sum
_{t=f(ds)}^{n}\left |\frac{1}{n}-\frac{t}{n^{2}}\right |  \notag \\
&&+2\sum _{t=n+1}^{f(d(m_*+q))-1}\left |\frac{1}{n}-\frac{t}{n^{2}}\right
||e(f(dj)\kappa )|.  \notag
\end{eqnarray}
Using the previous two relations, (\ref{r:CD2}) and (\ref{r:CD4}), we
conclude that 
\begin{equation}
D-F_{n}^{\ast }(\kappa )=O_{f}(dqn^{-1/k})+O_{f}(dq|\kappa |n^{1-1/k}){%
\text {.}}  \label{r:CF}
\end{equation}

Now note that $S_{d}(af,q)=\sum _{s=0}^{q-1}e(f(ds)a/q)$ is real. The claim
now follows by combining (\ref{r:AB}), (\ref{r:BBCC}), (\ref{r:CCDD}) and (%
\ref{r:CF}).
\end{proof}

Let ${\mathfrak{M}}(Q,R)$ denote the major arcs, namely the set of all $x\in 
{\mathbb{R}}$ which can be approximated by a rational $a/q$, $(a,q)=1$,
where $q\leq Q$, so that $|x-a/q|\leq 1/qR$, and let ${\mathfrak{m}}(Q,R)={%
\mathbb{R}}\backslash {\mathfrak{M}}(Q,R)$ be the minor arcs. We also define
a function $\tau (d,q)$ which will describe the behavior of the principal
part of the major arcs estimate:

\begin{eqnarray*}
\tau ^{\ast }(d,q)&=&\left \{ 
\begin{array}{cc}
1, & q|d^{l} \\ 
-c_{0}|\alpha _l|r^{-1/k} & {\text {otherwise,}}%
\end{array}
\right . \\
\tau (d,q)&=&\max \{\tau ^{\ast }(d,q),-1\},
\end{eqnarray*}
where $r=q/(q,d^{l})$ and $c_{0}$ is the implicit constant in Lemma \ref%
{l:cn}. Lemma \ref{l:cn} and Lemma \ref{l:content} imply that $%
S_{d}(af,q)/q\geq \tau (d,q)$. As $F_{n}(\kappa )$ is non-negative, we can
now put all the results of this section together:

\begin{corollary}
\label{c:major}\textbf{The major arcs estimate.} Let $G_{n,d}(x)$ be a
trigonometric polynomial as in (\ref{r:defg}) for some integer polynomial $f$
of a degree $k\geq 3$ satisfying (\ref{a:polynomial}). Assume $1\leq Q<R$
are given. Let $x\in {\mathfrak{M}}(Q,R)$, $x=a/q+\kappa $, $(a,q)=1$, $%
q\leq Q$. Then 
\begin{equation*}
G_{n,d}(x)\geq \tau (d,q)F_{n}(\kappa )+O_{f}(dn^{-1/k}(Q+n/R)){\text{.}}
\end{equation*}
\end{corollary}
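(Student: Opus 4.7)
The plan is to apply the Proposition immediately preceding the Corollary, then separately control the main term and the error term using the major-arc conditions on $q$ and $\kappa$.

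First I would substitute $x = a/q + \kappa$ into the Proposition to get
\begin{equation*}
G_{n,d}(x) = \frac{1}{q} S_d(af,q) F_n(\kappa) + O_f\bigl(dqn^{-1/k}(1+|\kappa|n)\bigr).
\end{equation*}
Since $x \in \mathfrak{M}(Q,R)$ with $q \leq Q$ and $|\kappa| \leq 1/(qR)$, the error factor satisfies $dq(1+|\kappa|n) \leq dQ + dn/R \leq d(Q+n/R)$, so the error collapses to $O_f(dn^{-1/k}(Q+n/R))$, which already matches the target.

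For the main term, I would justify the remark made just before the Corollary that $S_d(af,q)/q \geq \tau(d,q)$. Setting $g(x) = f(dx)$, we have $S_d(af,q) = \sum_{s=0}^{q-1} e(g(s)a/q)$. When $q \mid d^l$, every exponent of $g$ is at least $l$, so $q$ divides $g(s)$ for each $s$, giving $S_d(af,q) = q$ and $\tau^\ast(d,q) = 1$. Otherwise, Lemma~\ref{l:cn} applied to the polynomial $g$ produces $|S_d(af,q)| \leq c_0\,(c(g),q)^{1/k}\,q^{1-1/k}$, and Lemma~\ref{l:content} together with the definition $r = q/(q,d^l)$ yields $(c(g),q)^{1/k} \leq |\alpha_l|(q,d^l)^{1/k}$, so $|S_d(af,q)|/q \leq c_0 |\alpha_l| r^{-1/k}$. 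The truncation $\tau = \max\{\tau^\ast, -1\}$ is then harmless because the trivial bound $|S_d(af,q)| \leq q$ forces $S_d(af,q)/q \geq -1$ in all cases.

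Combining these two ingredients with the non-negativity $F_n(\kappa) \geq 0$ of the Fej\'er kernel turns the identity into the claimed inequality. The only real point to check is the case analysis for $\tau^\ast$ and the reduction in Lemma~\ref{l:content} from $c(g)$ to $d^l|\alpha_l|^k$; beyond this, the Corollary is a straightforward repackaging of the Proposition under the major-arc hypotheses, so no additional obstacle is expected.
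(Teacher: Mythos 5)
Your proposal is correct and reproduces the argument the paper leaves implicit: apply the preceding Proposition, absorb the error term using the major-arc bounds $q\le Q$ and $q|\kappa|\le 1/R$, and combine the lower bound $S_d(af,q)/q\ge\tau(d,q)$ with the non-negativity of $F_n(\kappa)$. One precision point worth tightening: Lemma~\ref{l:cn} should be applied to $ag$ rather than $g$, and $(a,q)=1$ is what reduces $(c(ag),q)$ to $(c(g),q)$; more substantively, the statement of Lemma~\ref{l:content} is only a size bound $c(g)\le d^l|\alpha_l|^k$, and a size bound by itself does not imply $(c(g),q)\le|\alpha_l|^k(q,d^l)$ --- what is really used is the divisibility $d^l\mid c(g)$ together with $c(g)/d^l\mid\alpha_l$ (both immediate because every nonzero coefficient $\alpha_jd^j$ of $g$ has $j\ge l$, the same observation you already deploy in the $q\mid d^l$ case), after which a prime-by-prime comparison of valuations gives the required gcd inequality. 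The paper compresses this into a single unexplained sentence, so your unpacking is an improvement; just make the reliance on the divisibility, rather than on the numerical estimate of Lemma~\ref{l:content}, explicit.
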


\section{The minor arcs}

We derive the following Lemma from the well-known estimates of Vinogradov.

\begin{lemma}
\label{l:vinogradov}Let $f(x)=\alpha _{k}x^{k}+...+\alpha _{1}x$ be an
integer polynomial. If $m,d,1\leq Q<R$ are constants so that $Q\geq \alpha
_{k}d^{k}m^{1/k}$ and $x\in {\mathfrak{m}}(Q,R)$, then 
\begin{equation*}
\sup _{1\leq m_{\ast }\leq m}\left \vert \sum _{j=1}^{m_{\ast
}}e(f(dj)x)\right \vert \ll _{f}\left (d^{-k}QR\right
)^{1/(k-1/k)}+m^{1-\rho },
\end{equation*}
where $\rho =1/(8k^{2}(\log k+1.5\log \log k+4.2))$.
\end{lemma}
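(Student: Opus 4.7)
The sum under consideration is a Weyl sum for the polynomial $g(y) = f(dy)$, which has degree $k$ and leading coefficient $\alpha_k d^k$. The strategy is to apply a sharp Vinogradov-type estimate for such Weyl sums on the minor arcs, and then translate the hypothesis on $x$ into a diophantine approximation condition on the leading-coefficient-scaled parameter $\alpha_k d^k x$, which is the quantity Vinogradov's theorem directly controls. The exponent $\rho = 1/(8k^{2}(\log k + 1.5 \log \log k + 4.2))$ is exactly Vinogradov's mean-value exponent for degree $k$, as in the standard accounts (e.g., Vaughan or Titchmarsh), so that component comes for free once the translation is set up.

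The main step is the following dictionary between the minor-arcs condition $x \in \mathfrak{m}(Q,R)$ and approximations of $\alpha_k d^k x$. By Dirichlet's theorem, for any target parameter $Q_1$ there exist coprime integers $a, q$ with $1 \le q \le Q_1$ and $|\alpha_k d^k x - a/q| \le 1/(q Q_1)$. Reducing $a/(q \alpha_k d^k)$ to lowest terms gives a rational approximation of $x$ with denominator at most $\alpha_k d^k q \ll_f d^k q$ and error at most $1/(q Q_1 \alpha_k d^k)$; placing this in $\mathfrak{M}(Q,R)$ would require both $d^k q \ll_f Q$ and $Q_1 \gg_f R$. Since $x \in \mathfrak{m}(Q,R)$, any Dirichlet approximation of $\alpha_k d^k x$ taken at scale $Q_1$ slightly larger than $R$ must therefore have $q \gg_f Q/d^k$. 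The auxiliary inequality $Q \ge \alpha_k d^k m^{1/k}$ ensures that this lower bound on $q$ is nontrivial against the natural Vinogradov range $q \lesssim m^{k-1}$ for a polynomial sum of length $m$, so we indeed land in the genuine minor-arc regime.

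With this lower bound on $q$ in hand, plug into the standard minor-arc Vinogradov estimate for $\sum_{j=1}^{m_*} e(g(j)x)$, which produces the term $m^{1-\rho}$ together with a contribution of shape $m \cdot q^{-\sigma}$ (with $\sigma$ the Vinogradov exponent paired to $\rho$). Substituting the lower bound $q \gg_f Q/d^k$ and the matching upper bound $1/(qR)$ on the approximation error, an elementary optimization of the two constraints produces the stated $(d^{-k} Q R)^{1/(k-1/k)}$ term alongside $m^{1-\rho}$. Uniformity in $1 \le m_* \le m$ is either built into the partial-sum version of Vinogradov one quotes or recovered a posteriori via Abel summation from the full-sum version. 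I expect the principal obstacle to be the pure bookkeeping of exponents in the Vinogradov--to--Dirichlet transfer combined with the leading-coefficient rescaling $x \leftrightarrow \alpha_k d^k x$: getting the precise exponent $k/(k^2 - 1) = 1/(k - 1/k)$ to appear after the balancing, and verifying that the constants swallowed into $\ll_f$ do not spoil the cleanness of the statement.
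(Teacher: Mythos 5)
Your framing is right at the top level: this is a degree-$k$ Weyl sum for $g(y)=f(dy)$, the relevant object is the scaled leading coefficient $\alpha_k d^k x$, and the exponent $\rho$ is exactly the Vinogradov--Karatsuba mean-value exponent from \cite{Vinogradov:84}, Section 11, which is what the paper cites. You also correctly see that the minor-arcs hypothesis on $x$ must be transferred to a ``no good rational approximation'' condition on $\alpha_k d^k x$, and that the constraint $Q\geq \alpha_k d^k m^{1/k}$ is there to make that transfer meaningful. Up to that point you and the paper are aligned.

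Where the proposal departs from the actual proof, and where I think it has a genuine gap, is the endgame. You posit a minor-arc bound of the shape $m^{1-\rho} + m\,q^{-\sigma}$ and then balance the $q$-term against the Dirichlet denominator. No such two-term Vinogradov estimate appears in the cited source, and the term $(d^{-k}QR)^{1/(k-1/k)}$ in the lemma does not arise from optimizing over $q$ at all. The paper's mechanism is a dichotomy on the \emph{length} $m_\ast$ of the partial sum. If $m_\ast\geq(\alpha_k^{-1}d^{-k}QR)^{1/(k-1/k)}$, then together with $Q\geq\alpha_k d^k m_\ast^{1/k}$ and $x\in\mathfrak{m}(Q,R)$, the coefficient vector $\boldsymbol\beta=(\alpha_k d^k x,\ldots,\alpha_1 d\,x)$ lands in Vinogradov's second class for the range $[1,m_\ast]$, and the pure bound $\ll_f m_\ast^{1-\rho}\leq m^{1-\rho}$ applies; if $m_\ast$ is below that threshold, the trivial bound $\bigl|\sum_{j\le m_\ast}e(g(j))\bigr|\leq m_\ast\leq(\alpha_k^{-1}d^{-k}QR)^{1/(k-1/k)}$ already gives the other term. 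The threshold on $m_\ast$ is a range-of-validity condition in Vinogradov's classification, not a free parameter to be balanced. So the $\sup$ over $1\leq m_\ast\leq m$ is handled automatically, with no Abel summation and no $q$-optimization. To repair your argument you would either have to supply and justify the specific $q$-dependent Weyl/Vinogradov bound you invoke (which is the hard part and not in the reference), or adopt the paper's short-sum/long-sum split, which sidesteps it entirely.
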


\begin{proof}
We write $g(j)$ instead of $f(dj)x$, or more precisely: let $g(y)=\beta
_{k}y^{k}+...+\beta _{1}y$ where $\beta _{j}=\alpha _{j}d^{j}x$. Let $1\leq
m_{\ast }\leq m$. Applying Vinogradov exponential sum bounds (see \cite%
{Vinogradov:84}, Section 11), it is easy to see that if $Q\geq \alpha
_{k}d^{k}m_{\ast }^{1/k}$, $m_{\ast }\geq (\alpha
_{k}^{-1}d^{-k}QR)^{1/(k-1/k)}$ and $x\in {\mathfrak{m}}(Q,R)$, then ${%
\mathbf{\beta }}=(\beta _{k},...,\beta _{1})\,$\ is of the second class,
thus 
\begin{equation}
\left\vert \sum_{j=1}^{m_{\ast }}e(f(dj)x)\right\vert \ll _{f}m_{\ast
}^{1-\rho }\leq m^{1-\rho }.  \label{r:aa}
\end{equation}

Trivially for any $m_{\ast }$,%
\begin{equation}
\left\vert \sum_{j=1}^{m_{\ast }}e(f(dj)x)\right\vert \leq m_{\ast }.
\label{r:bb}
\end{equation}

The claim now follows by summing (\ref{r:bb}) for the cases $m_{\ast }\geq
\left( \alpha _{k}^{-1}d^{-k}QR\right) ^{1/(k-1/k)}$ and (\ref{r:bb})$\ $for
the cases $m_{\ast }\leq (\alpha _{k}^{-1}d^{-k}QR)^{1/(k-1/k)}$.
\end{proof}

\begin{proposition}
\label{p:minor}\textbf{The minor arcs estimate.} Let $G_{n,d}(x)$ be the
trigonometric polynomial as in (\ref{r:defg}) for some odd integer
polynomial $f$ and $1\leq Q<R$ constants. Let $x\in {\mathfrak{m}}(Q,R)$.
Also assume that $d\leq n^{1/k}$ and $\alpha _{k}d^{k}n^{1/k^{2}}\leq Q$.
Then 
\begin{equation*}
G_{n,d}(x)\ll _{f}n^{-1/k}\left (QR\right )^{1/(k-1/k)}+dn^{-\rho /k}{\text {%
.}}
\end{equation*}
\end{proposition}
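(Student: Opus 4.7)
The approach is Abel (partial) summation: write $G_{n,d}(x) = (2/K)\,{\mathrm{Re}}\sum_{j=1}^{m} a_j\, e(f(dj)x)$, where $m$ is the largest integer with $\alpha_k(dm)^k \leq n$, $a_j = \alpha_k k d^k j^{k-1}(1/n - \alpha_k(dj)^k/n^2)$, and $K = 1 + O_f(dn^{-1/k})$ by (\ref{r:DD}) (so $K$ is bounded above and below by absolute constants, using $d\leq n^{1/k}$). Abel summation then gives
\[
|G_{n,d}(x)| \ll \Bigl(|a_m| + \sum_{j=1}^{m-1}|a_{j+1}-a_j|\Bigr)\sup_{1\leq m_*\leq m}\Bigl|\sum_{j=1}^{m_*} e(f(dj)x)\Bigr|,
\]
so it suffices to bound the coefficient variation and the supremum of the exponential sum separately.

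Setting $u = j/m$ and invoking $\alpha_k d^k m^k = n(1+O_f(dn^{-1/k}))$ from (\ref{r:estimatemm}), the sequence $(a_j)$ closely mirrors the continuous profile $(k/m)\,u^{k-1}(1-u^k)$ on $[0,1]$, which is unimodal with maximum $O_k(1/m)$. Hence both its total variation and $|a_m|$ are $O_f(1/m)$. For the supremum I verify the hypothesis of Lemma \ref{l:vinogradov}: $m \ll_f n^{1/k}/d$ together with the standing hypothesis $\alpha_k d^k n^{1/k^2} \leq Q$ gives $\alpha_k d^k m^{1/k} \ll_f \alpha_k d^{k-1/k} n^{1/k^2} \leq Q$, and the lemma yields $\sup_{m_*}|\cdots| \ll_f (d^{-k}QR)^{1/(k-1/k)} + m^{1-\rho}$.

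Combining and using $1/m \ll_f dn^{-1/k}$: the second piece is $(1/m)\,m^{1-\rho} = m^{-\rho} \ll_f d^{\rho} n^{-\rho/k} \leq dn^{-\rho/k}$, and the first, after noting $d \cdot d^{-k^2/(k^2-1)} = d^{-1/(k^2-1)} \leq 1$, becomes $\leq n^{-1/k}(QR)^{1/(k-1/k)}$. Adding these two pieces yields the claimed bound.

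The main point of care is the variation estimate — one must justify that the discrete sequence $(a_j)$ inherits total variation $O(1/m)$ from its smooth interpolant, which rests on the error control in (\ref{r:estimatemm}) (to confirm the endpoint sample $a_m$ is small) together with monotonicity on the two halves of the profile. Everything else is a routine combination of Abel summation with Lemma \ref{l:vinogradov}. Note that oddness of $f$ plays no role in this estimate; it was needed only to make $S(f,q)$ real in the major arcs step.
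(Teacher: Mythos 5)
Your argument is essentially the paper's: the same Abel summation reduces $G_{n,d}(x)$ to (coefficient variation)$\times\sup_{m_*}|\sum_{j\le m_*}e(f(dj)x)|$, the hypothesis of Lemma~\ref{l:vinogradov} is verified from $m\le n^{1/k}$ and $\alpha_k d^k n^{1/k^2}\le Q$ exactly as in the paper, and the final bookkeeping that absorbs the excess powers of $d$ matches. The only local difference is how the coefficient variation is estimated — you appeal to unimodality of the nonnegative profile $g(j)=c_1j^{k-1}-c_2j^{2k-1}$ on $[1,m]$ (which does hold, so total variation $\le 2\max_j g(j)\ll_f 1/m$), whereas the paper simply sums the termwise bound $|\Delta g(j)|\ll_f d^kj^{k-2}/n+d^{2k}j^{2k-2}/n^2$ to land on the same $O_f(dn^{-1/k})$.
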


\begin{proof}
Choose $m$ so that $\alpha _{k}d^{k}m^{k}\leq n<\alpha _{k}d^{k}(m+1)^{k}$.
Then 
\begin{equation}
\frac{d^{k}m^{k-1}}{n}+\frac{d^{2k}m^{2k-1}}{n^{2}}\ll _{f}dn^{-1/k}{\text {.%
}}  \label{r:minorhelp}
\end{equation}

We introduce the notation 
\begin{eqnarray*}
g(j)&=&\alpha _{k}kd^{k}j^{k-1}\left (\frac{1}{n}-\frac{\alpha _{k}(dj)^{k}}{%
n^{2}}\right ), \\
h(j)&=&\cos (2\pi f(dj)x)={\mathrm{Re}}\,e(f(dj)x){\text {.}}
\end{eqnarray*}
By partial integration, using the notation $\Delta g(j)=g(j+1)-g(j)$, $%
H(j)=\sum _{i=1}^{j}h(i)$, we get 
\begin{eqnarray*}
G_{n,d}(x)&=&\frac{2}{K}\sum _{j=1}^{m}g(j)h(j)=\frac{2}{K}\left
(g(m)H(m)-\sum _{j=1}^{m-1}\Delta g(j)H(j)\right ) \\
&\ll _{f}&\frac{1}{K}\left (\frac{d^{k}m^{k-1}}{n}+\frac{d^{2k}m^{2k-1}}{%
n^{2}}\right )\sup _{1\leq m_*\leq m}|H(m_*)|
\end{eqnarray*}

By (\ref{r:DD}) and as $d\leq n^{1/k}$, $1/K=O_{f}(1)$. As $Q\geq \alpha
_{k}d^{k}n^{1/k^{2}}$ and as by choice of $m$, $n^{1/k}\geq m$, we get $%
Q\geq \alpha _{k}d^{k}m^{1/k}$. We now combine Lemma \ref{l:vinogradov} and (%
\ref{r:minorhelp}): 
\begin{eqnarray*}
G_{n,d}(x) &\ll &_{f}dn^{-1/k}\left( \left( d^{-k}QR\right)
^{1/(k-1/k)}+n^{1/k-\rho /k}\right) \\
&\leq &n^{-1/k}\left( QR\right) ^{1/(k-1/k)}+dn^{-\rho /k}.
\end{eqnarray*}
\end{proof}

\section{Cancelling out the leading term}

Recall the definition of the functions $\tau ^{\ast }(d,q),$ $\tau (d,q)$ in
Section 2, estimating the principal part of the major arcs estimate. For
clarity of presentation, denote by $\alpha =c_{0}|\alpha _{l}|$, $\beta =1/k$%
, and then 
\begin{equation*}
\tau (d,q)=\left\{ 
\begin{array}{ll}
1 & \text{if }q|d^{l} \\ 
\max \{-\alpha r^{-\beta },-1\} & \text{otherwise,}%
\end{array}%
\right.
\end{equation*}%
where $r=q/(q,d^{l})$. We use in this section only the facts that $\alpha >0$%
, $0<\beta <1$.

\begin{theorem}
\label{t:averaging}\textbf{Averaging}. Assume $\delta >0$ is given. Then
there exist integer constants $s>0$, $1=d_{0}<d_{1}<...<d_{s}$, $d_{s}=O(%
\func{exp}(c_{1}\delta ^{-1/\beta })$, $c_{1}$ depending only on $\alpha
,\beta $, and a real constant $\lambda >0$ such that for any integer $q$, 
\begin{equation}
\frac{1}{\Lambda }\sum_{j=0}^{s}\lambda ^{j}\tau (d_{j},q)\geq -\delta {%
\text{,}}  \label{r:averaging}
\end{equation}%
where $\Lambda =1+\lambda +...+\lambda ^{s}$.
\end{theorem}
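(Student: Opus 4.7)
The plan is to construct $d_0 < d_1 < \cdots < d_s$ as a chain of highly divisible integers (with $d_j \mid d_{j+1}$) whose prime content exhausts every prime up to a threshold $R := (\alpha/\delta)^{1/\beta}$, and to use geometric weights $\lambda^j$ for a suitable $\lambda > 1$. The role of $R$ is immediate from the definition of $\tau$: whenever $r_d(q) := q/(q,d^l) > R$, one has $\tau(d,q) \geq -\alpha r^{-\beta} > -\delta$ automatically, so the only $(d,q)$ pairs that can push the average below $-\delta$ are the ``dangerous'' ones with $r \in \{2,\dots,\lfloor R\rfloor\}$.

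Concretely, pick a base $M$ divisible by every prime $p \leq R$ (e.g., $M = \mathrm{lcm}(1,2,\dots,\lceil R\rceil)$, so $\log M = O(R)$ by the prime number theorem) and set $d_j = M^j$. Decompose any $q$ as $q = q_M \cdot r_0$ with $q_M$ supported on primes dividing $M$ and $r_0$ coprime to $M$. Because every prime $\leq R$ divides $M$, either $r_0 = 1$ or $r_0 > R$; in the second case $r_j(q) \geq r_0 > R$ for every $j$ and $\tau(d_j, q) \geq -\delta$ uniformly, so the bound is immediate. The substantive case is when $q$ is $R$-smooth.

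For such $q$, the function $r_j(q) = \prod_{p \leq R} p^{\max(0,\, v_p(q) - j l v_p(M))}$ is non-increasing in $j$, reaches $1$ at $J_q := \max_p \lceil v_p(q)/(l v_p(M))\rceil$ (so $\tau(d_j,q) = 1$ for $j \geq J_q$), and has a ``bad window'' $W(q) := \{j : 1 < r_j(q) \leq R\}$ of length $O(\log R/l)$, since $r_j$ is divided by at least $2^l$ per unit increment of $j$ inside the window. I then split $\sum_j \lambda^j \tau(d_j,q)$ into contributions from $j < \min W(q)$ (each $\geq -\delta$), from $j \in W(q)$ (each $\geq -1$), and from $j \geq J_q$ (each $= +1$). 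Choosing $\lambda > 1$ close to $1$ and $s$ large enough, I arrange for the combined safe + perfect $\lambda$-weight to exceed the bad-window $\lambda$-weight by a factor $\geq 1/\delta$. This yields \eqref{r:averaging}, and the resulting estimate $\log d_s = s\log M = O(\delta^{-1/\beta})$ gives $d_s = O(\exp(c_1 \delta^{-1/\beta}))$.

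The main obstacle---and the source of the exponential bound on $d_s$---is handling $q$ for which $J_q > s$: then the ``perfect'' range lies outside $[0,s]$ and the bad window is pushed flush against the cutoff $j = s$, so only the safe region below $W(q)$ is available to compensate the window's negative contribution. Verifying that this compensation still holds requires a delicate joint choice of $\lambda$, $s$, and the divisibility structure of $M$; the resulting balance is exactly what determines the constant $c_1$ in the statement.
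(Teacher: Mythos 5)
Your approach has a genuine gap that cannot be repaired within the framework you describe, and it is precisely the obstacle you flag at the end. With $\lambda>1$ and the crude lower bound $\tau\geq -1$ inside the ``bad window,'' the required inequality fails whenever the window is flush against $j=s$. Write $W$ for the window, $W_{\mathrm{safe}}$, $W_{\mathrm{bad}}$, $W_{\mathrm{perf}}$ for the $\lambda$-weights of the three regions. Your lower bound is
\begin{equation*}
\sum_{j}\lambda^{j}\tau(d_j,q)\geq -\delta\,W_{\mathrm{safe}}-W_{\mathrm{bad}}+W_{\mathrm{perf}},
\end{equation*}
and the target $\geq -\delta\Lambda$ is equivalent, after cancelling the safe term, to $(1+\delta)W_{\mathrm{perf}}\geq (1-\delta)W_{\mathrm{bad}}$. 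The safe region is \emph{neutral}: it contributes at the target rate $-\delta$ and cannot compensate anything. So you do not need ``safe $+$ perfect $\geq$ bad $\cdot\, 1/\delta$''; you need $W_{\mathrm{perf}}\gtrsim W_{\mathrm{bad}}$, and when $J_q>s$ there is simply no perfect region inside $[0,s]$, so the inequality cannot hold for any $\lambda$ and $s$ as long as $W_{\mathrm{bad}}>0$. The ``delicate joint choice'' you invoke does not exist within these bounds. Separately, the $d_s$ estimate does not close: by the prime number theorem $\log M\asymp R\asymp \delta^{-1/\beta}$, so $\log d_s=s\log M=O(\delta^{-1/\beta})$ would force $s=O(1)$, while the bad-window analysis above already requires $s$ to grow with $1/\delta$.

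What you are missing is exactly what the paper exploits. First, inside the window one must use the precise estimate $\tau(d_j,q)\geq -\alpha r_j^{-\beta}$, not $\tau\geq -1$: because $r_j$ drops by at least a factor $2^l$ per unit step, the quantities $\alpha r_j^{-\beta}$ increase geometrically along the window, so the window's contribution is a geometric sum dominated by its \emph{last} index. Second, the paper takes $\lambda=2^{-\beta}<1$; with $\lambda<1$ the weights decay, and the calibration $\mu p^{\beta l}>1$ (Lemmas \ref{l:av1} and \ref{l:av15}) makes the per-prime bad contribution a convergent geometric tail $\ll \lambda^{m+1}/(1-\lambda)$, uniformly in $k$, which is then set to $\lesssim\delta$ by choosing $\lambda^{s+1}\asymp\delta$. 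In particular, the case ``window at the end'' is handled automatically because the weights there are already of size $\lesssim\delta$. Third, the paper does not take $d_j=M^j$ with $M=\mathrm{lcm}(1,\dots,R)$: that is far too wasteful. Instead it chooses, for each prime $p\leq 2^s$ separately, a slowly growing exponent sequence $a_0\leq a_1\leq\dots\leq a_s$ (with $a_j=\lfloor j/q\rfloor$ for large primes, in Lemma \ref{l:av2}) and sets $d_j=\prod_p p^{a_j}$; then $\log d_s\ll st$, and the two logarithmic factors hidden in $s$ and in $t=\pi(2^s)$ cancel, yielding $\log d_s=O(\delta^{-1/\beta})$. Finally, Lemma \ref{l:av3} is the device that reduces a general $q$ to a single prime factor, which is what makes the per-prime Lemmas applicable at all; your decomposition $q=q_M r_0$ does not supply this reduction.
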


We first discuss the case when $q$ is a prime power $q=p^{k}$, which encodes
the key idea of this section. If $p$ is a prime, then 
\begin{equation*}
\tau ^*(p^{j},p^{k})=\left \{ 
\begin{array}{cc}
1, & k\leq jl \\ 
-\alpha p^{-\beta (k-jl)} & {\text {otherwise.}}%
\end{array}
\right .
\end{equation*}

\begin{lemma}
\label{l:av1}Say $p$ is a prime, and $\mu $ any real constant satisfying 
\begin{equation}
1>\mu \geq \frac{\alpha +1}{\alpha +p^{\beta }}{\text {.}}  \label{r:mu}
\end{equation}
Then for any positive integer constants $s,k$, 
\begin{equation}
\sum _{j=0}^{s}\mu ^{j}\tau ^*(p^{j},p^{k})\geq -\mu ^{s+1}/(1-\mu ){\text {.%
}}  \label{r:mu0}
\end{equation}
\end{lemma}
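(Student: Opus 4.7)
The plan is to pass from the truncated sum over $j \leq s$ to the full infinite sum
$S_\infty(k) := \sum_{j=0}^\infty \mu^j \tau^*(p^j, p^k)$, exploiting the uniform upper bound $\tau^*(p^j, p^k) \leq 1$. Bounding the tail gives
\begin{equation*}
\sum_{j=0}^{s} \mu^j \tau^*(p^j, p^k) \geq S_\infty(k) - \sum_{j=s+1}^\infty \mu^j = S_\infty(k) - \frac{\mu^{s+1}}{1-\mu},
\end{equation*}
so it suffices to establish $S_\infty(k) \geq 0$ for every $k \geq 0$.

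To that end, I would first extract a shift identity directly from the definition of $\tau^*$: for $j \geq 1$ and $k \geq 0$, the condition $p^{k+l} \mid p^{jl}$ is equivalent to $p^k \mid p^{(j-1)l}$, and when both fail the quantity $r = q/(q,d^l)$ computes to $p^{k-(j-1)l}$ in either case, so $\tau^*(p^j, p^{k+l}) = \tau^*(p^{j-1}, p^k)$. Isolating the $j = 0$ term then yields the recursion
\begin{equation*}
S_\infty(k+l) = -\alpha\, p^{-\beta(k+l)} + \mu\, S_\infty(k).
\end{equation*}

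Iterating this from $k \equiv r \pmod{l}$, $0 \leq r < l$, down to the base cases $S_\infty(0) = 1/(1-\mu)$ and $S_\infty(r) = -\alpha p^{-\beta r} + \mu/(1-\mu)$ for $0 < r < l$ produces, after summing the geometric series with ratio $\mu p^{\beta l}$ (which is $\geq 1$, since (\ref{r:mu}) implies $\mu p^\beta \geq 1$), a closed expression of the form
\begin{equation*}
S_\infty(k) = c_1(r)\, \mu^{\lceil k/l \rceil} + c_2\, p^{-\beta k},
\end{equation*}
with $c_2 > 0$ automatic, and the nonnegativity of $c_1(r)$ reducing to a lower bound on $\mu$ of the shape $\mu(\alpha + p^\beta) \geq \alpha + p^{-\beta(l-r)}$. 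The strongest such constraint (the residue $r = 1$ when $l \geq 2$, and $\mu(\alpha + p^{\beta l}) \geq \alpha + 1$ when $r = 0$) is in every case weaker than the hypothesis $\mu(\alpha + p^\beta) \geq \alpha + 1$, so $S_\infty(k) \geq 0$ in every residue class.

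The main obstacle is the bookkeeping in the case split by $r = k \bmod l$: one needs to verify that among all the residue-dependent constraints on $\mu$ produced by the geometric summations, none is stronger than (\ref{r:mu}). The content of the lemma is exactly this observation: the mild inequality (\ref{r:mu}), with its $p^\beta$ rather than $p^{\beta l}$ in the denominator, is already strong enough to keep $S_\infty(k)$ nonnegative uniformly in $k$.
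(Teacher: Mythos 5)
Your approach is correct and organizes the argument somewhat differently from the paper. The paper works directly with the truncated sum: it lets $m$ be the largest index $\leq s$ with $ml<k$, bounds the negative contributions $\sum_{j=0}^{m}\mu^{j}\tau^{*}(p^{j},p^{k})$ below by a full geometric series with ratio $(\mu p^{\beta l})^{-1}$, uses (\ref{r:mu}) (multiplied through by $p^{\beta(l-1)}\geq 1$ to handle general $l$) to conclude $A_{m}\geq -\mu^{m+1}/(1-\mu)$, and then lets the unit contributions $j=m+1,\dots,s$ raise this to $-\mu^{s+1}/(1-\mu)$. You instead prove the stronger fact that the infinite sum $S_{\infty}(k)$ is nonnegative, via the shift identity $\tau^{*}(p^{j},p^{k+l})=\tau^{*}(p^{j-1},p^{k})$ and the resulting first-order recursion; truncating the tail using $\tau^{*}\leq 1$ then gives the lemma. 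The geometric series with ratio $\mu p^{\beta l}$ is the same computation in both treatments, but your framing makes the residue-class structure explicit. Your bookkeeping does check out: writing $k=ql+r$ and solving the recursion gives $S_{\infty}(k)=c_{1}(r)\mu^{q}+c_{2}p^{-\beta k}$ with $c_{2}=\alpha/(\mu p^{\beta l}-1)>0$ (note $\mu p^{\beta l}\geq\mu p^{\beta}>1$ strictly from (\ref{r:mu}), since $\alpha>0$); $c_{1}(0)\geq 0$ reduces to $\mu(\alpha+p^{\beta l})\geq\alpha+1$; and for $1\leq r<l$, $c_{1}(r)\geq 0$ reduces to $\mu(\alpha+p^{\beta r})\geq\alpha+p^{-\beta(l-r)}$. (You wrote $p^{\beta}$ where $p^{\beta r}$ belongs, and $\mu^{\lceil k/l\rceil}$ where $\mu^{q}$ belongs, but these are slips that do not affect the logic.) Each of these constraints is implied by (\ref{r:mu}) because $p^{\beta r}\geq p^{\beta}$ for $r\geq 1$, $p^{\beta l}\geq p^{\beta}$, and $p^{-\beta(l-r)}\leq 1$ for $r\leq l-1$, so $S_{\infty}(k)\geq 0$ in every residue class and the lemma follows. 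There is no gap.
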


\begin{proof}
It can be easily deduced from (\ref{r:mu}) that 
\begin{eqnarray}
\mu p^{\beta l}&>&1,  \label{r:mu000} \\
-\frac{\alpha p^{\beta (l-1)}}{\mu p^{\beta l}-1}&\geq &-\frac{1}{1-\mu }.
\label{r:mu00}
\end{eqnarray}

Assume $m$ is the largest integer so that $m\leq s$, $ml<k$. Then for all $%
j\leq m$, $\tau ^{\ast }(p^{j},p^{k})=-\alpha p^{\beta (jl-k)}$. Denote the
left side of (\ref{r:mu0}) by $A_{s}$. We first apply $k\geq ml+1$, then (%
\ref{r:mu000}) and finally (\ref{r:mu00}): 
\begin{eqnarray*}
A_{m} &=&-\alpha \sum_{j=0}^{m}\mu ^{j}p^{\beta (jl-k)}\geq -\alpha
\sum_{j=0}^{m}\mu ^{j}p^{\beta l(j-m-1)}=-\alpha \mu ^{m}p^{-\beta
}\sum_{j=0}^{m}(\mu p^{\beta l})^{-j}\geq \\
&\geq &-\alpha \mu ^{m}p^{-\beta }\sum_{j=0}^{\infty }(\mu p^{\beta
l})^{-j}=-\frac{\alpha p^{\beta (l-1)}\mu ^{m+1}}{\mu p^{\beta l}-1}\geq -%
\frac{\mu ^{m+1}}{1-\mu }{\text{.}}
\end{eqnarray*}%
The case $m=s$ is now proved. If $m<s$, then for $m<j\leq s$, $\tau ^{\ast
}(p^{j},p^{k})=1$, thus 
\begin{equation*}
A_{s}=A_{m}+\sum_{j=m+1}^{s}\mu ^{j}\geq -\frac{\mu ^{m+1}}{1-\mu }%
+\sum_{j=m+1}^{s}\mu ^{j}=-\mu ^{s+1}/(1-\mu ){\text{.}}
\end{equation*}
\end{proof}

We now improve Lemma \ref{l:av1} and (\ref{r:mu}), so that also for small $p$%
, $\mu $ can be close to $1/2$.

\begin{lemma}
\label{l:av15}Say $p$ is a prime, $1>\mu >1/2$ and $a\geq 1$ an integer
satisfying 
\begin{equation}
p^{\beta al}\geq \frac{\alpha p^{-\beta }(1-\mu )+2\mu -1}{\mu (2\mu -1)}.
\label{r:mu25}
\end{equation}

Then for any positive integers $s,k$, 
\begin{equation}
\sum _{j=0}^{s}\mu ^{j}\tau (p^{aj},p^{k})\geq -\mu ^{s+1}/(1-\mu ){\text {.}%
}  \label{r:mu25b}
\end{equation}
\end{lemma}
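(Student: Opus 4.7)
The plan is to follow the strategy of Lemma \ref{l:av1}, now exploiting the truncation $\tau(d,q) \geq -1$ (which is unused there); this truncation is precisely what enables $\mu$ to approach $1/2$, at the expense of requiring a large $a$.

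First, let $m \leq s$ be the largest index with $alm < k$, so that $\tau(p^{aj}, p^k) = 1$ for $j > m$. The tail $\sum_{j=m+1}^s \mu^j = (\mu^{m+1} - \mu^{s+1})/(1-\mu)$ reduces the task to showing
\begin{equation*}
A_m := \sum_{j=0}^m \mu^j \tau(p^{aj}, p^k) \;\geq\; -\mu^{m+1}/(1-\mu).
\end{equation*}
Next, set $j_*$ to be the largest $j \leq m$ with $a_j := \alpha p^{-\beta(k-ajl)} \leq 1$, so that for $j \leq j_*$ one has $\tau(p^{aj}, p^k) = -a_j$, while for $j_* < j \leq m$ the truncation gives $\tau(p^{aj}, p^k) = -1$. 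This yields the decomposition
\begin{equation*}
A_m \;=\; -\sum_{j=0}^m \mu^j a_j \;+\; \sum_{j=j_*+1}^m \mu^j (a_j - 1),
\end{equation*}
where the first sum is the ``Lemma \ref{l:av1}-style'' contribution and the second is the gain from truncation.

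For the first sum, the same backward-geometric-series argument as in the proof of Lemma \ref{l:av1} (with $l$ replaced by $al$, using $a_j = a_m p^{-\beta al(m-j)}$) gives the lower bound $-a_m \mu^{m+1} p^{\beta al}/(\mu p^{\beta al} - 1)$, provided $\mu p^{\beta al} > 1$. The truncation sum is itself a pair of geometric series in $i = m - j$, with explicit value $\mu^m [\, a_m \sum_{i=0}^{c-1}(\mu p^{\beta al})^{-i} - \sum_{i=0}^{c-1}\mu^{-i}\,]$, where $c := m - j_*$. Adding these, the leading $a_m$-terms telescope, and after rearranging and clearing $(\mu p^{\beta al})^{c-1}$, the desired bound $A_m \geq -\mu^{m+1}/(1-\mu)$ reduces to
\begin{equation*}
\frac{a_m}{\mu p^{\beta al} - 1} \;\leq\; \frac{(2\mu^c - 1) \, p^{\beta al(c-1)}}{1-\mu}.
\end{equation*}
The critical case is $c=1$: substituting $a_m \leq \alpha p^{-\beta}$ (which holds because $u := k - alm \geq 1$ and $a_m = \alpha p^{-\beta u}$) makes this inequality exactly equivalent to the hypothesis (\ref{r:mu25}).

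The main obstacle is verifying that $c=1$ is the binding case. The cases $c \geq 2$ require $a_m > p^{\beta al(c-1)}$ (obtained by chaining the defining inequality of $j_*$ back from $j=m$), hence $\alpha > p^{\beta(al(c-1)+1)}$. For $\mu \leq 1/\sqrt{2}$, hypothesis (\ref{r:mu25}) already forces $p^{\beta al}$ to be large enough that no such $\alpha$ is admissible, so $c=1$ is the only nontrivial possibility; for $\mu > 1/\sqrt{2}$, the factor $p^{\beta al(c-1)}$ on the right-hand side of the displayed condition dominates, and (\ref{r:mu25}) still suffices. Executing this case split cleanly, rather than the geometric-series bookkeeping, is the delicate part of the proof.
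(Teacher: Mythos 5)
Your decomposition is a genuine refinement of the paper's argument, but that very refinement is what creates the difficulty you flag at the end. The paper's proof truncates $\tau$ at $-1$ at \emph{one} index only, $j=m$, and for $j<m$ uses the cruder lower bound $\tau(p^{aj},p^k)\geq -\alpha p^{\beta(ajl-k)}$ even when this quantity is below $-1$. One infinite geometric series bound then gives $B_m\geq-\mu^m\bigl(\tfrac{\alpha p^{-\beta}}{\mu p^{\beta al}-1}+1\bigr)$, and asking this to be $\geq-\mu^{m+1}/(1-\mu)$ is, after rearrangement, \emph{exactly} hypothesis (\ref{r:mu25}) — no parameter $c$, no case split. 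You instead truncate at every index $j>j_*$, which is sharper (one can verify your lower bound on $A_m$ dominates the paper's for every $c$, since for $c\geq 1$ one has $a_m> p^{\beta al(c-1)}\geq p^{\beta al\,i}$ for all $i\leq c-1$), so your route must also succeed — but the way you present it, as a direct derivation of a $c$-dependent inequality from (\ref{r:mu25}), leaves real gaps. You verify only $c=1$. The case $c=0$ (where $a_m\leq 1$) is not discussed at all, and there the argument genuinely splits according to whether $(2\mu-1)p^{\beta al}\geq 1$: in one sub-case you must use $a_m\leq 1$ and in the other $a_m\leq\alpha p^{-\beta}$ together with (\ref{r:mu25}). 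For $c\geq 2$ you give only a sketch; the factor $2\mu^c-1$ on the right-hand side of your displayed condition decreases toward $0$ (and eventually goes negative) as $c$ grows, so "the factor $p^{\beta al(c-1)}$ dominates" needs an actual bound on how large $c$ can be in terms of $\mu$ and $p^{\beta al}$. And the degenerate case $j_*=-1$ (no index with $a_j\leq 1$, so your first sum is empty and $c=m+1$), which does occur for $\mu$ near $1$, is not mentioned. So as written the proposal has a genuine gap; the paper's choice of truncating only at the top index is precisely the device that removes this bookkeeping.
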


\begin{proof}
We follow the steps of the proof of Lemma \ref{l:av1}, and first note that (%
\ref{r:mu25}) implies 
\begin{eqnarray}
\mu p^{\beta al}&>&1,  \label{r:mu270} \\
-\frac{\alpha p^{-\beta }+\mu p^{\beta al}-1}{\mu p^{\beta al}-1}&\geq &-%
\frac{\mu }{1-\mu }{\text {.}}  \label{r:mu27}
\end{eqnarray}

Denote the left side of (\ref{r:mu25b}) by $B_{s}$. Let $m$ be the largest
integer so that $m\leq s$, $aml<k$. Then for all $j\leq m$, $\tau
(p^{aj},p^{k})\geq -\alpha p^{\beta (ajl-k)}$. In the calculation below we
apply that and the following facts respectively: $\tau \geq -1$ for $j=m$; $%
k\geq aml+1$;\ then (\ref{r:mu270}) and finally (\ref{r:mu27}). We thus have%
\begin{eqnarray*}
B_{m} &\geq &-\alpha \sum_{j=0}^{m-1}\mu ^{j}p^{\beta (ajl-k)}-\mu ^{m}\geq
-\alpha p^{-\beta }\sum_{j=0}^{m-1}\mu ^{j}p^{\beta al(j-m)}-\mu ^{m} \\
&\geq &-\alpha \mu ^{m-1}p^{-\beta (al+1)}\sum_{j=0}^{m-1}(\mu p^{\beta
al})^{-j}-\mu ^{m} \\
&\geq &-\mu ^{m}\frac{\alpha p^{-\beta }+\mu p^{\beta al}-1}{\mu p^{\beta
al}-1}\geq -\frac{\mu ^{m+1}}{1-\mu }{\text{.}}
\end{eqnarray*}

The rest of the proof is analogous to the proof of Lemma \ref{l:av1}.
\end{proof}

We now set $\lambda =1/2^{\beta }$, and combine Lemmas \ref{l:av1} and \ref%
{l:av15} to find the prime power components of $d_{j}$ in\ Theorem \ref%
{r:averaging}.

\begin{lemma}
\label{l:av2}There exist a constant $c_{2}$ depending only on $\alpha ,\beta 
$, so that the following holds:\ for any positive integer $s$ and prime
number $p\leq 2^{s}$, there exist integers $0=a_{0}\leq a_{1}\leq ...\leq
a_{s}$ such that for any positive $k$, 
\begin{eqnarray}
\sum_{j=0}^{s}\lambda ^{j}\tau (p^{a_{j}},p^{k}) &\geq &-\frac{1+\alpha }{%
1-\lambda }\lambda ^{s+1}{\text{,}}  \label{r:hb} \\
p^{a_{s}} &<&2^{c_{2}s}.  \label{r:pmax}
\end{eqnarray}
\end{lemma}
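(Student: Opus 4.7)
My plan is to split on the size of the prime $p$ and apply Lemma~\ref{l:av1} in a large-$p$ regime after ``clustering'' the weights $\lambda^{j}$ into blocks of size $m = m(p)$, and to apply Lemma~\ref{l:av15} directly in the small-$p$ regime. The central identity for the clustering is that, with $a_{j} := \lfloor j/m \rfloor$ and $\nu := \lambda^{m}$,
\begin{equation*}
\sum_{j=0}^{\infty} \lambda^{j} \tau(p^{a_{j}}, p^{k}) \;=\; \frac{1-\nu}{1-\lambda} \sum_{i=0}^{\infty} \nu^{i} \tau(p^{i}, p^{k}),
\end{equation*}
obtained by grouping the $j$'s sharing a common value of $a_{j}$. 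This trades the fixed outer base $\lambda$ for an adjustable inner base $\nu = \lambda^{m}$, which I can tune to fit the hypothesis of Lemma~\ref{l:av1}.

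Passing $s \to \infty$ in Lemma~\ref{l:av1} with $\mu = \nu$ gives $\sum_{i=0}^{\infty} \nu^{i} \tau^{*}(p^{i}, p^{k}) \geq 0$ whenever $\nu \geq (\alpha+1)/(\alpha+p^{\beta})$; since $\tau \geq \tau^{*}$ the same bound holds for $\tau$, and the identity above yields $\sum_{j=0}^{\infty} \lambda^{j} \tau(p^{a_{j}}, p^{k}) \geq 0$. Truncating at $s$ and using $\tau \leq 1$ in the tail,
\begin{equation*}
\sum_{j=0}^{s} \lambda^{j} \tau(p^{a_{j}}, p^{k}) \;\geq\; -\sum_{j=s+1}^{\infty} \lambda^{j} \;=\; -\frac{\lambda^{s+1}}{1-\lambda},
\end{equation*}
which is actually stronger than (\ref{r:hb}). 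I choose $m := \lfloor \log_{2}((\alpha + p^{\beta})/(\alpha+1))/\beta \rfloor$, the largest integer satisfying the Lemma~\ref{l:av1} hypothesis; a one-line computation shows $m \geq (\log_{2} p)/2$ once $p$ exceeds a threshold $p_{\ast}(\alpha, \beta)$ (concretely, $p_{\ast} \leq 4(\alpha+1)^{2/\beta}$ suffices). Then $a_{s} \leq s/m$, so $p^{a_{s}} \leq 2^{(s/m)\log_{2} p} \leq 2^{2s}$.

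For the finitely many primes $p \leq p_{\ast}$ I apply Lemma~\ref{l:av15} directly with $\mu = \lambda$ (valid since $\lambda = 2^{-\beta} > 1/2$) and $a = a(p)$ the smallest positive integer satisfying (\ref{r:mu25}). The right side of (\ref{r:mu25}) is bounded by a constant depending only on $\alpha, \beta$, so $a(p)$ is likewise bounded. Setting $a_{j} := a(p) \cdot j$, Lemma~\ref{l:av15} gives (\ref{r:hb}) directly, and $p^{a_{s}} = p^{a(p) s} \leq 2^{Cs}$ for a constant $C = C(\alpha, \beta)$. Taking $c_{2} := \max(2, C)$ yields (\ref{r:pmax}) uniformly.

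The main delicacy is verifying compatibility of the two constraints on $m$ in the large-$p$ case: the Lemma~\ref{l:av1} hypothesis forces $m \leq \log_{2}((\alpha + p^{\beta})/(\alpha+1))/\beta$ from above, while the size bound $p^{a_{s}} \leq 2^{c_{2} s}$ needs $m \geq (\log_{2} p)/c_{2}$ from below. These are simultaneously satisfiable precisely once $\log_{2} p$ exceeds a constant determined by $\alpha, \beta$, which pins down $p_{\ast}$ and cleanly isolates the finite residual handled by Lemma~\ref{l:av15}.
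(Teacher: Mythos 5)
Your proposal is correct, and for the large-prime case it takes a genuinely cleaner route than the paper's. The small-prime case (Lemma~\ref{l:av15} with $a_{j}=a_{*}j$ and $\mu =\lambda$) is essentially identical to the paper's. The difference is in the large-prime regime. The paper also clusters the weights $\lambda ^{j}$ into blocks: it picks the block size $q$ via $p_{*}^{q}\leq p<p_{*}^{q+1}$, applies Lemma~\ref{l:av1} with $\mu =\lambda ^{q}$, and then must handle the misalignment between the truncation point $j=s$ and the block boundary $j=bq$. This forces a case analysis on $k\leq bl$ versus $k>bl$, and the residual ``partial block'' at the end is what produces the extra $(1+\alpha )$ factor in~(\ref{r:hb}). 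Your approach instead passes $s\to \infty $ in Lemma~\ref{l:av1} first, notes that the full infinite $j$-sum is a positive multiple (namely $(1-\nu )/(1-\lambda )$) of the nonnegative infinite $i$-sum, and only then truncates, bounding the tail trivially via $\tau \leq 1$. This completely sidesteps the block-alignment issue, gives the sharper bound $-\lambda ^{s+1}/(1-\lambda )$ with no $(1+\alpha )$ loss, and doesn't even use the hypothesis $p\leq 2^{s}$ (that constraint is needed in the paper only to guarantee $b\geq 1$). A further cosmetic improvement is that you take the \emph{largest} admissible block size $m$ directly from the hypothesis of Lemma~\ref{l:av1}, rather than the one derived from $p_{*}$-adic intervals; both give $p^{a_{s}}\ll 2^{O(s)}$, but yours yields the clean constant $2$ in $p^{a_{s}}\leq 2^{2s}$.

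Two very minor points worth tidying up if you were to write this out fully: the threshold for $a(p)$ in~(\ref{r:mu25}) formally involves $l$, so one should observe (as the paper implicitly does) that $l\geq 1$ and $p^{\beta al}\geq p^{\beta a}$, so taking the worst case $l=1$ gives a bound depending only on $\alpha ,\beta $; and the inequality in~(\ref{r:pmax}) is strict, so use $c_{2}$ slightly larger than $2$ (or note $a_{s}<s/m+1$ gives room). Neither affects the substance.
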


\begin{proof}
We will distinguish small and large primes, and will apply below Lemma \ref%
{l:av1} for large, and Lemma \ref{l:av15} for small primes. Let $p_{\ast
}=p_{\ast }(\alpha ,\beta )$ be the smallest prime such that (\ref{r:mu})\
holds for $p=p_{\ast }$ and $\mu =\lambda =1/2^{\beta }$ (and then it holds
for all $p\geq p_{\ast }$). Let $a_{\ast }=a_{\ast }(\alpha ,\beta )$ be the
smallest integer so that (\ref{r:mu25}) holds for $p=2$, $a=a_{\ast }$ and $%
\mu =\lambda =1/2^{\beta }$ (and then it holds for all primes $p$ and the
same $a=a_{\ast }$). We distinguish two cases:

(i)\ Assume $p$ is small, i.e. $p<p_{\ast }$. Then we set $a_{j}=a_{\ast }j$%
. Because of definition of $a_{\ast }$, we can apply Lemma \ref{l:av15} and
get 
\begin{equation}
\sum _{j=0}^{s}\lambda ^{j}\tau (p^{a_{j}},p^{k})\geq -\frac{1}{1-\lambda }%
\lambda ^{s+1}{\text {.}}  \label{r:finalA}
\end{equation}

We also see that 
\begin{equation}
p^{a_{s}}<p_{\ast }^{a_{\ast }s}{\text {.}}  \label{r:finalB}
\end{equation}

(ii) Let $p$ be large, that means $p^{\ast }\leq p\leq 2^{s}$. We find an
integer $q$ so that $p_{\ast }^{q}\leq p<p_{\ast }^{q+1}$, and let $b,r\,\ $%
be the quotient and the remainder of dividing $s$ by $q$, thus $s=bq+r$. Let 
$a_{j}=\left \lfloor j/q\right \rfloor $, where $\left \lfloor
x\right
\rfloor $ is the largest integer not greater than $x$. First note
that the function $f(x)=(\alpha +x^{q})/(\alpha +x)^{q}$ is increasing for $%
x\geq 1$ (e.g. by differentiating). Now applying this, the definition of $%
p_{\ast }$ and $p\geq p_{\ast }^{q}$, we get 
\begin{equation}
\lambda ^{q}\geq \left (\frac{\alpha +1}{\alpha +p_{\ast }^{\beta }}\right
)^{q}\geq \frac{\alpha +1}{\alpha +p_{\ast }^{\beta q}}\geq \frac{\alpha +1}{%
\alpha +p^{\beta }}{\text {.}}  \label{r:mu3}
\end{equation}

Denote the right side of (\ref{r:hb}) with $C_{s}(k)$ and let $C_{s}^*(k)$
be the same sum with $\tau ^*$ instead of $\tau $. We can now apply Lemma %
\ref{l:av1} with $\mu =\lambda ^{q}=1/2^{\beta q}$, and get 
\begin{eqnarray}
C_{bq-1}^*(k)&=&\sum _{j=0}^{b-1}(1+\lambda +...+\lambda ^{q-1})\mu ^{j}\tau
^*(p^{j},p^{k})\geq  \notag \\
&\geq &-\frac{1+\lambda +...+\lambda ^{q-1}}{1-\mu }\mu ^{b}=-\sum
_{j=bq}^{\infty }\lambda ^{j}{\text {.}}  \label{r:b11}
\end{eqnarray}

We analyze two cases. Suppose $k\leq bl$. Then $\tau ^{\ast }(p^{f},p^{k})=1$%
. We use (\ref{r:b11}) and get 
\begin{equation*}
C_{s}^{\ast }(k)=C_{bq-1}^{\ast }(k)+\sum_{j=bq}^{s}\lambda ^{j}\geq
-\sum_{j=s+1}^{\infty }\lambda ^{j}=-\frac{1}{1-\lambda }\lambda ^{s+1}{%
\text{.}}
\end{equation*}%
Now assume $k>bl$. Then $\tau ^{\ast }(p^{b},p^{k})\leq 0$ and also for all $%
j\leq b-1$, $\tau ^{\ast }(p^{j},p^{k})=p^{-\beta }\tau ^{\ast
}(p^{j},p^{k-1})$. We now get from (\ref{r:b11}) that 
\begin{equation}
C_{bq-1}^{\ast }(k)=p^{-\beta }C_{bq-1}^{\ast }(k-1)\geq -p^{-\beta
}\sum_{j=bq}^{\infty }\lambda ^{j}{\text{.}}  \label{r:b12}
\end{equation}%
It is easy to deduce from (\ref{r:mu3}) that 
\begin{equation}
-p^{-\beta }\geq -\lambda ^{q}.  \label{r:b13}
\end{equation}%
As $\tau ^{\ast }(p^{b},p^{k})\geq -\alpha p^{-\beta }$, because of (\ref%
{r:b12}), (\ref{r:b13}) and finally $bq+q\geq s+1$, we get 
\begin{eqnarray*}
C_{s}^{\ast }(k) &=&C_{bq-1}^{\ast }(k)+\sum_{j=bq}^{s}\lambda ^{j}\tau
^{\ast }(p^{b},p^{k})\geq -p^{-\beta }\sum_{j=bq}^{\infty }\lambda
^{j}-\sum_{j=bq}^{s}\lambda ^{j}\alpha p^{-\beta }\geq \\
&\geq &-(1+\alpha )\sum_{j=bq+q}^{\infty }\lambda ^{j}\geq -\frac{1+\alpha }{%
1-\lambda }\lambda ^{s+1}{\text{.}}
\end{eqnarray*}%
As $C_{s}(k)\geq C_{s}^{\ast }(k)$, we see that (\ref{r:hb}) holds in both
cases. Finally, 
\begin{equation}
p^{a_{s}}=p^{b}<p_{\ast }^{b(q+1)}\leq p_{\ast }^{2s}{\text{.}}
\label{r:finalC}
\end{equation}

We get (\ref{r:pmax}) from (\ref{r:finalB}) and (\ref{r:finalC}), with $%
c_{2}=\max \{2,a_{\ast }\}\log _{2}p_{\ast }$.
\end{proof}

We now show why the left side of (\ref{r:averaging})\ can be reduced to
analysis of a prime factor.

\begin{lemma}
\label{l:av3}Say $d_{0},d_{1},...,d_{s}$ is a sequence of integers such that 
$d_{j}|d_{j+1}$. Then for each integer $q$, there exists a prime $p$ such
that for all $j$, 
\begin{equation}
\tau (d_{j},q)\geq \tau (p^{a_{j}},p^{k}){\text {,}}  \label{r:prim2}
\end{equation}
where $p^{a_{j}},p^{k}$ are the factors in the prime decomposition of $d_{j}$%
, $q$ respectively.
\end{lemma}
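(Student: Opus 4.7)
The plan is to reduce the claim to a single prime by exploiting the fact that $\tau^{\ast}(d_j, q)$ depends on $d_j$ only through $r_j := q/(q, d_j^l)$, which factors nicely over primes. Writing the prime factorizations $q = \prod_p p^{k_p}$ and $d_j = \prod_p p^{a_{j,p}}$, I would observe that
$$r_j = \prod_p p^{(k_p - l a_{j,p})^+},$$
where $(x)^+ = \max(x, 0)$. The assumption $d_j \mid d_{j+1}$ makes each exponent $a_{j,p}$ non-decreasing in $j$, so $r_j$ is non-increasing and the set $\{j : q \mid d_j^l\} = \{j : r_j = 1\}$ is upward-closed.

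First suppose $q \mid d_j^l$ for every $j \in \{0, \ldots, s\}$; then $\tau(d_j, q) = 1$ uniformly, and any prime $p$ trivially witnesses the inequality since $\tau(p^{a_{j,p}}, p^{k_p}) \leq 1$. Otherwise, let $j^{\ast}$ be the smallest index with $q \mid d_{j^{\ast}}^l$ (with the convention $j^{\ast} = s+1$ if no such index exists). The critical choice is to pick any prime $p$ for which the $p$-factor of $r_{j^{\ast}-1}$ is nontrivial, i.e., $k_p > l a_{j^{\ast}-1, p}$; such a $p$ exists because $r_{j^{\ast}-1} > 1$. With this $p$ fixed, for $j \geq j^{\ast}$ we have $\tau(d_j, q) = 1 \geq \tau(p^{a_{j,p}}, p^{k_p})$. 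For $j < j^{\ast}$, monotonicity gives $l a_{j,p} \leq l a_{j^{\ast}-1, p} < k_p$, so $\tau^{\ast}(p^{a_{j,p}}, p^{k_p}) = -\alpha p^{-\beta(k_p - l a_{j,p})}$; since $p^{k_p - l a_{j,p}}$ is one of the prime-power factors in the factorization of $r_j$, we have $r_j \geq p^{k_p - l a_{j,p}}$, hence
$$\tau^{\ast}(d_j, q) = -\alpha r_j^{-\beta} \geq -\alpha p^{-\beta(k_p - l a_{j,p})} = \tau^{\ast}(p^{a_{j,p}}, p^{k_p}),$$
and applying $\max\{\,\cdot\,,-1\}$ to both sides preserves the inequality.

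The argument is mostly bookkeeping, so the only real content is identifying the right prime $p$. The guiding insight is that a prime witnessing the largest ``resistance'' $r_{j^{\ast}-1}$ automatically dominates the corresponding factor for every smaller $j$ as well, thanks to the divisibility chain; and past the threshold $j^{\ast}$ the bound $\tau(d_j, q) = 1$ is maximal and requires no work. I do not anticipate any genuine obstacle beyond making the case split and the max-with-$-1$ truncation explicit.
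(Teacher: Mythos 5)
Your proof is correct and follows essentially the same route as the paper: identify the threshold index at which $q\mid d_j^l$ first occurs, pick any prime $p$ in the factorization of $r$ at the last pre-threshold index, and use the monotonicity coming from the divisibility chain to dominate each $r_j$ by the corresponding $p$-power. You have simply written out the "straightforward to check" step that the paper leaves implicit.
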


\begin{proof}
If $q=1$, then $k=0$, so both sides of (\ref{r:prim2})\ are equal to $1$.
Assume now that $q>1$. Let $m+1$ be the smallest index such that $%
q|d_{m+1}^{l}$ (if there is no such $m$, we set $m=s)$. If $m=0$, then $%
q|d_{j}^{l}$ for all $j$, so both sides of (\ref{r:prim2})\ are equal to $1$%
. In that case, we choose any prime $p$ in the decomposition of $q$.

Now say $1\leq m\leq s$, and let $r=q/(q,d_{m}^{l})$ and let $p$ be any
prime in the prime decomposition of $r$. For $j\geq m+1$, both sides of (\ref%
{r:prim2}) are equal to 1. For $j\leq m$, it is straightforward to check (%
\ref{r:prim2}).
\end{proof}

We now complete the proof of Theorem \ref{t:averaging}. Recall that $\lambda
=1/2^{\beta }$. Let $c_3$ be the largest of the constants $(1+\alpha
)/(1-\lambda )$ and $\alpha /\lambda $, and choose $s$ so that 
\begin{equation}
\lambda \delta /c_3\leq \lambda ^{s+1}\leq \delta /c_3.  \label{r:tm1}
\end{equation}
Let $\Lambda =1+\lambda +\cdots +\lambda ^s$, $m=2^s$ and $2=p_1<p_2<\ldots
<p_t$ be all the prime numbers between $1$ and $m$, and let $a_j^i$ be the
exponents constructed in Lemma \ref{l:av2}, associated to the prime $p_i$, $%
i=1,\ldots ,t$, $j=0,\ldots ,s$. We set 
\begin{equation*}
d_{j}=\prod _{i=1}^tp_i^{a_{j}^i}{\text {.}}
\end{equation*}
Let $p$ be the smallest prime number constructed in Lemma \ref{l:av3}. If $%
p\leq m$, then $p=p_{i}$, for some $i=1,\ldots ,t$. Now applying Lemma \ref%
{l:av3}, Lemma \ref{l:av2}, (\ref{r:tm1}) and $\Lambda \geq 1$, we deduce
that for any positive integer $q$, 
\begin{equation*}
\frac{1}{\Lambda }\sum _{j=0}^{s}\lambda ^{j}\tau (d_{j},q)\geq \frac{1}{%
\Lambda }\sum _{j=0}^{s}\lambda ^{j}\tau (p_i^{a_{j}^i},p_i^{k})\geq -\frac{%
1+\alpha }{(1-\lambda )\Lambda }\lambda ^{s+1}\geq -\delta {\text {.}}
\end{equation*}
Now assume that $p>m$. Then Lemma \ref{l:av3} and (\ref{r:tm1}) imply that 
\begin{equation*}
\frac{1}{\Lambda }\sum _{j=0}^{s}\lambda ^{j}\tau (d_{j},q)\geq -\alpha
p^{-\beta }\geq -\delta {\text {.}}
\end{equation*}
We deduce that (\ref{r:averaging}) holds. Now we estimate $d_s$. By (\ref%
{r:tm1}) and the definition of $m$, we get $m\leq (c_3/\delta )^{1/\beta }$
and thus 
\begin{equation}
s\leq \frac{1}{\log 2}\log (c_3/\delta )^{1/\beta }.  \label{r:tm2}
\end{equation}
The prime number theorem implies that $t\leq c_4(m/\log m)$, for some
constant $c_4$, so 
\begin{equation}
t\leq c_4\frac{(c_3/\delta )^{1/\beta }}{\log (c_3/\delta )^{1/\beta }}.
\label{r:tm3}
\end{equation}
Finally, by applying (\ref{r:pmax}), (\ref{r:tm2}) and (\ref{r:tm3}), we get
that $d_s\leq \func{exp}(c_2st)\leq \func{exp}(c_1\delta ^{-1/\beta })$,
where $c_1=c_2c_3^{1/\beta }c_4/\log 2$.

\section{Proof of Theorem $\protect\ref{t:main1}$}

We choose $\delta >0$, and find first, by applying the Theorem \ref%
{t:averaging}, the constants $d_{0},...,d_{s}$ and $\lambda >0$ such that
for all integers $q$, 
\begin{eqnarray}
\frac{1}{\Lambda }\sum_{j=0}^{s}\lambda ^{j}\tau (d_{j},q) &\geq &-\delta /2{%
\text{,}}  \label{r:FinalAve} \\
d_{s} &\leq &\func{exp}(c_{5}\delta ^{-k}){\text{,}}  \label{r:Finald}
\end{eqnarray}%
where $\Lambda =1+\lambda +\cdots \lambda ^{s}$ and $c_{5}$ depends on the
degree and the coefficients of the polynomial $f$. Let $c_{6}$ and $c_{7}$
be the implicit constants from Corollary \ref{c:major} and Proposition \ref%
{p:minor} respectively. To streamline the calculations below, we define $%
c_{8}=2(\alpha _{k}+1)\max \{c_{5},c_{6},c_{7}\}/c_{5}$ and $d_{\ast }=c_{8}%
\func{exp}(c_{5}\delta ^{-k})$. Then it is easy to check that 
\begin{equation}
\max \{c_{6},c_{7}\}(\alpha _{k}+1)d_{\ast }^{-1}\leq \frac{\delta }{2}
\label{r:nep1}
\end{equation}%
and that $d_{j}\leq d_{\ast }$ for all $j=1,...,s$. Compiling the
constraints and the error terms from Corollary \ref{c:major} and Proposition %
\ref{p:minor}, we see that it is now enough to choose the constants $n,Q,R$
so that: 
\begin{eqnarray*}
c_{6}d_{\ast }n^{-1/k}(Q+n/R) &\leq &\delta /2, \\
d_{\ast } &\leq &n^{1/k}, \\
\alpha _{k}d_{\ast }^{k}n^{1/k^{2}} &\leq &Q, \\
c_{7}\left( n^{-1/k}\left( QR\right) ^{1/(k-1/k)}+d_{\ast }n^{-\rho
/k}\right) &\leq &\delta /2{\text{,}}
\end{eqnarray*}%
where $\rho =1/(8k^{2}(\log k+1.5\log \log k+4.2)$. One can check\ using (%
\ref{r:nep1}) that the choice $n=d_{\ast }^{k^{8}}$, $Q=\alpha _{k}d_{\ast
}^{1.5k^{6}}$ and $R=d_{\ast }^{k^{8}-k^{7}+k^{5}-2.5k^{4}}$ satisfies all
these relations. We now define the cosine polynomial 
\begin{equation*}
T(x)=\delta +\frac{(1-\delta )}{\Lambda }\sum_{j=0}^{s}\lambda
^{j}G_{n,d_{j}}(x).
\end{equation*}

Clearly $T(0)=1$. Now for $x\in {\mathfrak{M}}(Q,R)$, Corollary \ref{c:major}%
, (\ref{r:FinalAve}) with the choice of constants above imply $T(x)\geq
\delta +(1-\delta )(-\delta /2-\delta /2)\geq 0$. Similarly for $x\in {%
\mathfrak{m}}(Q,R)$, $T(x)\geq 0$. Choose $m_{j}$ such that $\alpha
_{k}d_{j}^{k}m_{j}^{k}\leq n<\alpha _{k}d_{j}^{k}(m_{j}+1)^{k}$, for $%
j=0,\ldots ,s$, and let $m=\max \{m_{0},\ldots ,m_{s}\}$. For given $\delta
>0$, the largest non-zero coefficient of the polynomial $T$ is of the order
at most $N=P(d_{\ast }m)$. From (\ref{r:pol1}), we get $N=O_{f}(\func{exp}%
(c_{5}(k+k^{8})\delta ^{-k}))$, thus $\delta =O_{f}((\log N)^{-1/k})$.

\section{Proof of the lower bound}

The proof of the lower bound mimics the construction of I. Ruzsa in the case
of $f(x)=x^{2}$ (\cite{Ruzsa09}).

\begin{lemma}
\label{l:congruence}Let $k\geq 3$ be an odd integer, $\beta >0$ an integer,
and $p\equiv 1({\mathrm{mod}}\,k)$ a prime, $p>\beta $. Then there exists a
collection of integers $a_{1},a_{2},....,a_{s}$, $s=(p-1)/k$ such that for
any integer $j$, $(j,p)=1$, 
\begin{equation*}
\sum _{i=1}^{s}\cos (2\pi \beta j^{k}a_{i}/p)\leq -\sqrt{s/(k-2)}{\text {.}}
\end{equation*}
\end{lemma}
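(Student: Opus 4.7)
The plan is to construct $\{a_1,\ldots,a_s\}$ as integer representatives of a carefully chosen coset of the subgroup $H \subset (\mathbb{Z}/p\mathbb{Z})^*$ of $k$th power residues, which has index $k$ and order $s=(p-1)/k$. Because $k$ is odd we have $-1=(-1)^k\in H$, hence $H=-H$ and every coset $C$ of $H$ is invariant under negation. This is precisely what forces each coset sum
\begin{equation*}
T_C \;:=\; \sum_{a\in C} e(a/p)
\end{equation*}
to be a real number, which is the structural reason the lemma is true.

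Next I would compute the first two moments of $\{T_C\}$ as $C$ ranges over the $k$ nonzero cosets. Summing over the partition of $(\mathbb{Z}/p\mathbb{Z})^*$ immediately gives $\sum_C T_C=\sum_{a\neq 0}e(a/p)=-1$. For the second moment, grouping pairs in the same coset by the ratio $h=b/a\in H$,
\begin{equation*}
\sum_C T_C^2 \;=\; \sum_{a\neq 0}\sum_{h\in H} e\!\left(\frac{a(1+h)}{p}\right) \;=\; (p-1)-(s-1) \;=\; (k-1)s+1,
\end{equation*}
since the inner sum over $a$ equals $p-1$ only for $h=-1\in H$ and equals $-1$ for the other $s-1$ values.

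From these two moment identities I extract the desired bound by elementary optimization. Among $k$ real numbers with sum $-1$ and sum-of-squares $(k-1)s+1$, the maximum of the minimum (by a Lagrange argument, or equivalently Cauchy--Schwarz applied to the other $k-1$ values) is attained when one value is smallest and the remaining $k-1$ coincide. Solving $T_1=a$, $T_2=\cdots=T_k=(-1-a)/(k-1)$ together with the second moment constraint yields
\begin{equation*}
a \;=\; -\frac{1+(k-1)\sqrt{ks+1}}{k} \;\leq\; -\sqrt{\frac{(k-1)^2 s}{k}} \;\leq\; -\sqrt{\frac{s}{k-2}},
\end{equation*}
where the last inequality uses $(k-1)^2(k-2)\geq k$ for every $k\geq 3$. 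Consequently some coset $C^*$ satisfies $T_{C^*}\leq -\sqrt{s/(k-2)}$.

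Finally, since $p>\beta$ gives $(p,\beta)=1$, I take $\{a_1,\ldots,a_s\}$ to be any integer representatives of the coset $\beta^{-1}C^*$. For each $j$ coprime to $p$, $j^k\in H$ fixes every coset of $H$ under multiplication, so $\{\beta j^k a_i \bmod p\}=C^*$ as a set, giving
\begin{equation*}
\sum_{i=1}^{s}\cos(2\pi\beta j^k a_i/p) \;=\; {\mathrm{Re}}\,T_{C^*} \;=\; T_{C^*} \;\leq\; -\sqrt{s/(k-2)}.
\end{equation*}
The only genuinely subtle point is recognizing how the odd parity of $k$ enforces $-1\in H$ and hence reality of every $T_C$; once this is in place, the moment computations and the optimization are entirely routine.
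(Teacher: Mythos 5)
Your structural setup is the same as the paper's: the $k$ cosets of the $k$th-power subgroup $H\subset(\mathbb{Z}/p\mathbb{Z})^{*}$, realness of each coset sum because odd $k$ forces $-1\in H$, and the two moment identities $\sum_C T_C=-1$ and $\sum_C T_C^{2}=(k-1)s+1=p-s$. These are all correct, as is the closing step of passing from the bad coset $C^{*}$ to representatives of $\beta^{-1}C^{*}$.

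The optimization step, however, has the inequality running the wrong way. If $a=\min_C T_C$ and you apply Cauchy--Schwarz to the other $k-1$ values, you get
$(1+a)^{2}=\bigl(\sum_{j\geq 2}T_j\bigr)^{2}\leq (k-1)\sum_{j\geq 2}T_j^{2}=(k-1)(S-a^{2})$,
which rearranges to $ka^{2}+2a+1-(k-1)S\leq 0$ and hence to
$a\geq a_{-}=-\bigl(1+(k-1)\sqrt{p}\bigr)/k$.
This is a \emph{lower} bound on the minimum: a feasibility constraint saying the minimum cannot be too negative. The configuration $T_1=a_{-}$, $T_2=\cdots=T_k=(-1-a_{-})/(k-1)$ is the one where the minimum is as negative as possible, i.e.\ it realizes the \emph{minimum} of the minimum, not the maximum. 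The maximum of $\min_C T_C$ subject to the two moment constraints is attained at the opposite extreme, with $k-1$ values tied at the minimum and one large value, and equals $-(1+\sqrt{p})/k$; for $p$ large this is strictly larger (closer to $0$) than $-\sqrt{s/(k-2)}$. So the two moments by themselves do not force any $T_C\leq-\sqrt{s/(k-2)}$, and your concluding chain $a\leq-\sqrt{(k-1)^{2}s/k}\leq -\sqrt{s/(k-2)}$, while arithmetically correct, is being applied to a quantity that is not an upper bound for $\min_C T_C$. (The paper's own proof reaches the same two moments and then splits $\{A_m\}$ into negative and non-negative groups, bounding $\sum_{A_m<0}A_m^{2}\leq k^{-}c^{2}$ and $\sum_{A_m\geq 0}A_m^{2}\leq(k^{-}c-1)^{2}$; that route is also governed by the number $k^{-}$ of negative terms, and for $k^{-}=k-1$ it again produces only $c\geq(1+\sqrt{p})/k$. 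It is worth checking how the sharper constant $\sqrt{s/(k-2)}$ is being extracted there, and whether the application in Theorem~\ref{t:lower}, which ultimately only needs $\sum_{p\mid j}a_j\geq 1/\sqrt{p}$, is already served by the weaker $(1+\sqrt{p})/k$.)
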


\begin{proof}
Assume without loss of generality that $\beta =1$ (we can do it as $(\beta
,p)=1$). As the congruence $x^{k}\equiv y^{k}({\mathrm{mod}}\,p)$ has $k$
solutions for any $y$ relatively prime with $p$, we can divide the set of $%
p-1$ reduced residue classes ${\mathrm{mod}}\,p$ into $k$ equivalence
classes $Q_{1},...,Q_{k}$ of size $s=(p-1)/k$, defined as: $a_{1}\sim a_{2}$
if for some $j$, $(j,p)=1$, 
\begin{equation*}
a_{1}a_{2}^{-1}\equiv j^{k}{\text{ }}({\mathrm{mod}}\,p)
\end{equation*}%
(the $a_{2}^{-1}$ is the multiplicative inverse of $a_{2}$ ${\mathrm{mod}}%
\,p $). As $k$ is odd, $a\sim -a$. We conclude that the sum $A_{m}$ defined
below is real, on the left-hand side independent of $j$, $(j,p)=1$ and on
the right-hand side independent of $a\in Q_{m}$: 
\begin{equation}
A_{m}=\sum_{a\in Q_{m}}e(j^{k}a/p)=\frac{1}{k}\sum_{j=1}^{p-1}e(j^{k}a/p){%
\text{.}}  \label{r:suma1}
\end{equation}

By definition, 
\begin{equation}
\sum _{m=1}^{k}A_{m}=\sum _{a,(a,p)=1}e(j^{k}a/p)=-1{\text {.}}
\label{r:one}
\end{equation}

We now evaluate $\sum_{m=1}^{k}A_{m}^{2}$ by using the right-hand side of (%
\ref{r:suma1}), and get 
\begin{eqnarray}
\sum_{m=1}^{k}sk^{2}A_{m}^{2} &=&\sum_{a=1}^{p-1}\left\vert
\sum_{x=1}^{p-1}e(x^{k}a/p)\right\vert
^{2}=\sum_{a=1}^{p-1}\sum_{x,y=1}^{p-1}e((x^{k}-y^{k})a/p)=  \notag \\
&=&\sum_{x,y=1}^{p-1}\sum_{a=1}^{p-1}e((x^{k}-y^{k})a/p)=(kp-p+1)(p-1), 
\notag \\
\sum_{m=1}^{k}A_{m}^{2} &=&p-s{\text{,}}  \label{r:two}
\end{eqnarray}%
where we used that $x^{k}\equiv y^{k}$ has $k$ solutions ${\mathrm{mod}}\,p$
for $x,y$ relatively prime with $p$. Now suppose all $A_{m}\geq -c$ for some 
$c\geq 0$. If there are $k^{-}$ numbers $A_{m}<0$, $1\leq k^{-}\leq k$, then 
$\sum_{m,A_{m}<0}A_{m}^{2}\leq k^{-}c^{2}$, and by using (\ref{r:one}), $%
\sum_{m,A_{m}\geq 0}A_{m}^{2}\leq (k^{-}c-1)^{2}$. Combining that with (\ref%
{r:two}), we easily get $c\geq \sqrt{s/(k-2)}$. Now we can find $A_{m}\leq -%
\sqrt{s/(k-2)}$, and choose $a_{1},...,a_{s}$ to be the elements of $Q_{m}$.
\end{proof}

We now complete the proof of Theorem \ref{t:lower}. Choose any cosine
polynomial (\ref{r:type}), $T(x)\geq 0$ and $T(0)=1$, with $f(x)=\beta x^{k}$%
. By calculating $\sum _{i=1}^{s}T(a_{i}/p)$, $p\equiv 1({\mathrm{mod}}\,k)$%
, $p>\beta $, applying Lemma \ref{l:congruence} and noting that $1+\sqrt{%
s(k-2)}\leq \sqrt{p}$, we get 
\begin{equation*}
\sum _{p|j}a_{j}\geq \frac{1}{\sqrt{p}}{\text {.}}
\end{equation*}

We multiply this by $\log p$ and sum for $p\leq m$. We get 
\begin{equation*}
\sum a_{j}\sum _{p|j,p\equiv 1({\mathrm{mod}}\,k),p\leq m}\log p\geq \sum
_{p|j,p\equiv 1({\mathrm{mod}}\,k),p\leq m}\frac{\log p}{\sqrt{p}}{\text {.}}
\end{equation*}

By the theorem on primes in arithmetic sequences, the sum on the right side
is $\sqrt{m}(2/\varphi (k)+o(1))$. On the left-hand side, the coefficient of 
$a_{j}$, $j>0$ is $\leq \log j\leq \log n$ (where $n$ is the largest
non-zero coefficient of $T(x)$), and the coefficient of $a_{0}$ is $%
m(1/\varphi (k)+o(1))$, where $\varphi (k)$ is the Euler's totient function.
By writing $\varepsilon =o(1)$, we get 
\begin{equation*}
ma_{0}\left( \frac{1}{\varphi (k)}+\varepsilon \right) +\log n\geq \left( 
\frac{2}{\varphi (k)}-\varepsilon \right) \sqrt{m}{\text{.}}
\end{equation*}

We express $a_{0}$, minimize over $\sqrt{m}$ and obtain $a_{0}\geq \left(
1/\varphi (k)-\varepsilon \right) /\log n$.

\begin{acknowledgement}
The authors thank I. Z. Ruzsa for suggesting the use of the approximate Fej%
\'{e}r kernel.
\end{acknowledgement}


\begin{thebibliography}{99}
\bibitem{Bergelson:07} V. Bergelson and\ E. Lesigne, \emph{Van der Corput
sets in }$Z^{d}$, Colloq. Math. \textbf{110} (2008), 1-49.

\bibitem{Bourgain:87} J. Bourgain, \emph{Ruzsa's problem on sets of
recurrence}, Israel J. Math. \textbf{59} (1987), 151-166.

\bibitem{Chen:77} J. R. Chen, \emph{On Professor Hua's estimate of
exponential sums}, Sci.\ Sinica, 20 (1977).

\bibitem{Furstenberg:77} H. Furstenberg, \emph{Ergodic behaviour of diagonal
measures and a theorem of Szemer\'{e}di on arithmetic progressions}, J.
Anal. Math., \textbf{31} (1977), 204-256.

\bibitem{Kamae:77} T. Kamae, M. Mend\`{e}s France, \emph{Van der Corput's
difference theorem}, Israel J. Math. \textbf{31} (1977), 335-342.

\bibitem{Lucier:07} J. Lucier, \emph{Intersective sets given by a polynomial}%
, Acta Arith. \textbf{123} (2006), 57-95.

\bibitem{Matolcsi:12a} M. Matolcsi, I. Z. Ruzsa, \emph{Difference sets and
positive exponential sums I. General properties}. Preprint.

\bibitem{Montgomery:94} H. L. Montgomery, \emph{Ten lectures on the
Interface Between Analytic Number Theory and Harmonic Analysis, }AMS\
(1994), CMBS\ Regional Conference Series in\ Mathematics, 84.

\bibitem{Nechaev:75} V. I. Nechaev, \emph{An estimate of the complete
rational trigonometric sum,} Math. Notes 17 (1975), 504-511.

\bibitem{Nincevic:12} M. Nincevic, \emph{On van der Corput property of
polynomials}, PhD thesis (in Croatian) (2012), University of Zagreb.

\bibitem{Rabar:12} B. Rabar, M. Nincevic, S. Slijepcevic, \emph{Ergodic
characterization of van der Corput sets}, Arch. Math. 98 (2012), 355-360.

\bibitem{Ruzsa:81} I. Z. Ruzsa, \emph{Uniform distribution, positive
trigonometric polynomials and difference sets}, in Semin. on Number Theory.
Univ. Bordeaux I, 1981-82. No 18

\bibitem{Ruzsa:84a} I. Z. Ruzsa, \emph{Connections between the uniform
distribution of a sequence and its differences}, Topics in Classical Number
Theory, Vol. I, II (Budapest, 1981), 1419-1443, Colloq. Math. Soc. J\`{a}nos
Bolyai, 34, North-Holland, Amsterdam (1984).

\bibitem{Ruzsa09} I. Z. Ruzsa, personal communication, June 2009.

\bibitem{Slijepcevic:10} S. Slijep\v{c}evi\'{c}, \emph{On van der Corput
property of squares}, Glas. mat. ser. III 45(65) (2010), 357-372.

\bibitem{Slijepcevic:12} S. Slijep\v{c}evi\'{c}, \emph{On van der Corput
property of shifted primes}, Funct. Approx. Comment. Math., to appear.

\bibitem{Vinogradov:84} I. M. Vinogradov, A. A. Karatsuba, \emph{The method
of trigonometric sums in number theory.} (Russian) Trudy Mat. Inst. Steklov.
168 (1984), 4--30.
\end{thebibliography}
\end{document}